\def\cal{\mathcal}
\newcommand{\eqref}[1]{(\ref{#1})}
\newcommand{\N}{\mathbb{N}}
\newcommand{\ep}{\epsilon}
\newcommand{\rn}{\sqrt{n}}
\newcommand{\veps}{\varepsilon}
\newcommand{\R}{\mathbb{R}}
\newcommand{\ga}{\gamma}
\def\1{\mathbh{1}}
\newcommand{\al}{\alpha}
\newcommand{\ta}{\tau}
\newcommand{\vphi}{\varphi}
\newcommand{\RR}{\mathbb{R}}
\newcommand{\cB}{{\mathcal{B}}}
\newcommand{\cC}{{\mathcal{C}}}
\newcommand{\cD}{{\mathcal{D}}}
\newcommand{\cF}{{\mathcal{F}}}
\newcommand{\cH}{{\mathcal{H}}}
\newcommand{\cK}{{\mathcal{K}}}
\newcommand{\cM}{{\mathcal{M}}}
\newcommand{\cN}{{\mathcal{N}}}
\newcommand{\cS}{{\mathcal{S}}}
\newcommand{\mb}{\mathbb{B}}
\newcommand{\mh}{\mathbb{H}}
\newcommand{\leqa}{\lesssim}
\newcommand{\geqa}{\gtrsim}
\newtheorem{thmm}{Theorem}[section]
\newtheorem{prop}{Proposition}
\newtheorem{cor}{Corollary}
\begin{document}
\begin{frontmatter}

\title{A Bernstein--von Mises theorem for smooth functionals in semiparametric models\thanksref{T1}}
\runtitle{A BvM theorem for smooth functionals}
\thankstext{T1}{Supported in part by ANR Grant projects ``Banhdits''
ANR-2010-BLAN-0113-03
and ``Calibration'' ANR 2011 BS01 010 01.}

\begin{aug}
\author[A]{\fnms{Isma\"el} \snm{Castillo}\corref{}\thanksref{m1,m2}\ead[label=e1]{ismael.castillo@math.cnrs.fr}}
\and
\author[B]{\fnms{Judith} \snm{Rousseau}\thanksref{m3,m4}\ead[label=e2]{rousseau@ceremade.dauphine.fr}}
\runauthor{I. Castillo and J. Rousseau}
\affiliation{CNRS--LPMA\thanksmark{m1},
Universit\'es Paris VI \& VII\thanksmark{m2},
CREST--ENSAE\thanksmark{m3}\\ and Universit\'e Paris Dauphine\thanksmark{m4}}
\address[A]{CNRS--LPMA \\
Universit\'es Paris VI \& VII \\
B\^{a}timent Sophie Germain \\
75205 Paris Cedex 13\\
France\\
\printead{e1}}

\address[B]{CREST--ENSAE\\
Laboratoire de Statistiques\\
3 avenue Pierre Larousse\\
92245 Malakoff Cedex\\
France\\
and\\
Universit\'e Paris Dauphine\\
Place du Mar\'echal DeLattre de Tassigny\\
75016 Paris\\
France\\
\printead{e2}}
\end{aug}

%
\received{\smonth{5} \syear{2013}}
%
\revised{\smonth{4} \syear{2015}}

%
\begin{abstract}
A Bernstein--von Mises theorem is derived for general semiparametric
functionals. The result is applied to a variety of semiparametric
problems in i.i.d. and non-i.i.d. situations. In particular, new tools
are developed to handle semiparametric bias, in particular for
nonlinear functionals and in cases where regularity is possibly low.
Examples include the squared $L^2$-norm in Gaussian white noise,
nonlinear functionals in density estimation, as well as functionals in
autoregressive models. For density estimation, a systematic study of
BvM results for two important classes of priors is provided, namely
random histograms and Gaussian process priors.
\end{abstract}

%
\begin{keyword}[class=AMS]
\kwd[Primary ]{62G20}
\kwd[; secondary ]{62M15}
\end{keyword}

\begin{keyword}
\kwd{Bayesian nonparametrics}
\kwd{Bernstein--von Mises theorem}
\kwd{posterior concentration}
\kwd{semiparametric inference}
\end{keyword}
%
\end{frontmatter}

\section{Introduction}\label{sec1}

Bayesian approaches are often considered to be close asymptotically to
frequentist likelihood-based approaches so that the impact of the prior
disappears as the information brought by the data---typically the
number of observations---increases. This common knowledge is verified
in most parametric models, with a precise expression of it through the
so-called Bernstein--von Mises theorem or property (hereafter, BvM).
This property says that, as the number of observations increases the
posterior distribution can be approached by a Gaussian distribution
centered at an efficient estimator of the parameter of interest
and with variance the inverse of the Fisher information matrix of the
whole sample; see, for instance, van der Vaart \cite{aad98}, Berger
\cite{berger} or
Ghosh and Ramamoorthi \cite{ghosh:ramamo:2003}. The situation becomes,
however, more
complicated in non- and semiparametric models.
Semiparametric versions of the BvM property consider the behaviour of
the marginal posterior in a parameter of interest, in models
potentially containing an infinite-dimensional nuisance parameter.
There some care is typically needed in the choice of the nonparametric
prior and a variety of questions linked to prior choice and techniques
of proofs arise.
Results on semiparametric BvM applicable to general models and/or
general priors include Shen \cite{shen:2002}, Castillo \cite{icbvm},
Rivoirard and Rousseau \cite
{rivoirard:rousseau:09} and Bickel and Kleijn \cite{bickel:kleijn}.
The variety of
possible interactions between prior and model and the subtleties of
prior choice are illustrated in the previous general papers and in
recent results in specific models such as Kim \cite{kim06}, De Blasi
and Hjort \cite
{deblasi09}, Leahu \cite{leahu:11}, Knapik et al. \cite{kvv11},
Castillo \cite{icsan} and Kruijer and Rousseau \cite
{kruijer:rousseau:12}. In between semi- and nonparametric results, BvM
for parameters with growing dimension have been obtained in, for
example, Ghosal \cite{ghosal00}, Boucheron and Gassiat \cite
{boucheron:gassiat:10} and Bontemps \cite
{bontemps:11a}.
Finally, although there is no immediate analogue of the
BvM property for infinite dimensional parameters, as pointed out by
Cox \cite{C93} and Freedman \cite{Free99}, some recent contributions have
introduced possible notions of nonparametric BvM; see Castillo and
Nickl \cite{CN12} and
also Leahu \cite{leahu:11}. In fact, the results of the present paper are
relevant for these, as discussed below.

For semiparametric BvM, it is of particular interest to obtain generic
sufficient conditions that do not depend on the specific form of the
considered model.
In this paper, we provide a general result, Theorem~\ref{lem:laplace}
in Section~\ref{sec:main}, on the existence of the BvM property for
generic models and functionals of the parameter. Let us briefly discuss
the scope of our results; see Section~\ref{sec:main} for precise
definitions. Consider a model parameterised by $\eta$ varying in a
(subset of a) metric space $S$ equipped with a $\sigma$-field
$\mathcal
S$. Let $\psi: S \rightarrow\R^d$, $d \geq1$, be a measurable
functional of interest and let $\Pi$ be a probability distribution on $S$.
Given observations $Y^n$ from the model, we study the asymptotic
posterior distribution of $\psi(\eta)$, denoted $\Pi[\psi(\eta
)|
Y^n]$. Let $\cN(0,V)$ denote the centered normal law with covariance
matrix $V$. We give general conditions under which a BvM-type property
is valid,
%
\begin{equation}
\label{bvm-informal} \Pi \bigl[ \sqrt{n} \bigl( \psi(\eta) - \hat {\psi} \bigr) |
Y^n \bigr] \rightsquigarrow\cN(0,V),
\end{equation}
as $n\to\infty$ in probability,
where $\hat\psi$ is a (random) centering point, and $V$ a covariance
matrix, both to be specified, and where $\rightsquigarrow$ stands for
weak convergence. An interesting and well-known consequence of BvM is
that posterior credible sets, such as equal-tail credible intervals,
highest posterior density regions or one-sided credible intervals are
also confidence regions with the same asymptotic coverage.

The contributions of the present paper can be regrouped around the
following aims:
\begin{longlist}[1.]
\item[1.] Provide general conditions on the model and on the functional
$\psi$ to guarantee \eqref{bvm-informal} to hold, in a variety of
frameworks both i.i.d. and non-i.i.d. This includes investigating how
the choice of the prior influences bias $\hat\psi$ and
variance $V$. This also includes studying the case of nonlinear
functionals, which involves specific techniques for the bias. This is
done via a Taylor-type expansion of the functional involving a linear
term as well as, possibly, an additional quadratic term.
\item[2.] In frameworks with low regularity, second-order properties in the
functional expansion may become relevant. We study this as an
application of the main theorem in the important case of estimation of
the squared $L^2$-norm of an unknown regression function in the case
where the convergence rate for the functional is still parametric but
where the ``plug-in'' property in the sense of Bickel and Ritov \cite
{BR03} is not
necessarily satisfied.
\item[3.] Provide simple and ready-to-use sufficient conditions for BvM in
the important example of density estimation on the unit interval. We
present extensions and refinements
in particular of results of Castillo \cite{icbvm} and Rivoirard
and Rousseau \cite{rivoirard:rousseau:09} regarding, respectively, the use of Gaussian process priors in the context of density estimation,
and, the possibility to consider nonlinear functionals.
The class of random density histogram
priors is also studied in details systematically for the first time in
the context of Bayesian semiparametrics.
\item[4.] Provide simple sufficient conditions on the prior for BvM to hold
in a more complex example involving dependent data, namely the
nonlinear autoregressive model. To our knowledge, this is the first
result of this type in such a model.
\end{longlist}

The techniques and results of the paper, as it turned out, have also
been useful for different purposes in a recent series of works
developing a multiscale approach for posteriors, in particular:
(a) to prove functional limiting results, such as Bayesian versions of
Donsker's theorem, or more generally BvM results as in
Castillo and Nickl \cite{cn14}, a first step consists in proving the
result for finite
dimensional projections: this is exactly asking for a semiparametric
BvM to hold, and results from Section~\ref{sec:density} can be
directly applied;
(b) related to this is the study of many functionals simultaneously:
this is used in the study of posterior contraction rates in the
supremum norm in Castillo \cite{ic13}.
Finally, along the way, we shall also derive posterior rate results for
Gaussian processes which are of independent interest; see Proposition~2
in the supplemental article (Castillo and
Rousseau \cite{castillo:rousseau:suppl}).

Our results show that the most important condition is a \textit{no-bias}
condition, which will be seen to be essentially necessary. This
condition is written in a nonexplicit way in the general Theorem~\ref
{lem:laplace}, since the study of such a condition depends heavily on
the family of priors that are considered together with the statistical
model. Extensive discussions on the implication of this no-bias
condition are provided in the context of the white noise model and
density models for two families of priors. In the examples, we have
considered the main tool used to verify this condition consists in
constructing a change of parameterisation in the form $\eta\rightarrow
\eta+ \Gamma/\sqrt{n}$ for some given $\Gamma$ depending on the
functional of interest, which leaves the prior approximately unchanged.
Roughly speaking, for the no-bias condition to be valid, it is
necessary that both $\eta_0$ and $\Gamma$ are well approximated under
the prior. If this condition is not verified, then BvM may not hold: an
example of this phenomenon is provided in Section~\ref{counter}.

Theorem~\ref{lem:laplace} does not rely on a specific type of model,
nor on a specific family of functionals. In Section~\ref{sec:WNmodels},
it is applied to the study of a nonlinear functional in the white
noise model, namely the squared-norm of the signal. Applications to
density estimation with three different types of functionals and to an
autoregressive model can be found respectively in Section~\ref
{sec:density} and Section~\ref{sec:autoreg}. Section~\ref{sec:proofs}
is devoted to proofs, together with the supplemental article (Castillo
and Rousseau \cite
{castillo:rousseau:suppl}).

\subsection*{Model, prior and notation} \label{subsec:notation}

Let $(\mathcal Y^n, \mathcal G^n, P^n_\eta, \eta\in S)$ be a
statistical experiment, with observations $Y^n$ sitting on a space
$\mathcal Y^n$ equipped with a $\sigma$-field $\mathcal G^n$, and where
$n$ is an integer quantifying the available amount of information. We
typically consider the asymptotic framework $n\to\infty$. We assume
that $S$ is equipped with a $\sigma$-field $\mathcal S$, that $S$ is a
subset of a linear space and that for all $\eta\in S$, the measures
$P^n_\eta$ are absolutely continuous with respect to a dominating
measure $\mu_n$. Denote by $p_\eta^n$ the associated density and by
$\ell_n(\eta)$ the log-likelihood. Let $\eta_0$ denote the true value
of the parameter and $P^n_{\eta_0}$ the frequentist distribution of the
observations $Y^n$ under $\eta_0$. Throughout the paper, we set
$P_0^n:= P_{\eta_0}^n$ and $P_0 := P_0^1$. Similarly, $E_0^n[\cdot]$
and $E_0[\cdot]$ denote the expectation under $P_0^n$ and $P_0$,
respectively, and $E_\eta^n$ and $E_\eta$ are the corresponding
expectations under $P_\eta^n$ and $P_\eta$.
Given any prior probability $\Pi$ on $S$, we denote by $\Pi[\cdot|Y^n
]$ the associated posterior distribution on $S$, given by Bayes formula:
$\Pi[B| Y^n]=\int_B p_\eta^n(Y^n)\,d\Pi(\eta)/\int p_\eta
^n(Y^n)\,d\Pi
(\eta)$. Throughout the paper, we use the notation $o_p$ in the place
of $o_{P_0^n}$ for simplicity.

The quantity of interest in this paper is a functional $\psi:S\to\RR
^d, d\ge1$. We restrict in this paper to the case of real-valued
functionals $d=1$, noting that the presented tools do have natural
multivariate counterparts not detailed here for notational simplicity.

For $\eta_1,\eta_2$ in $S$, the Kullback--Leibler divergence between
$P_{\eta_1}^n$ and $P_{\eta_2}^n$ is
\[
\mathit{KL} \bigl(P_{\eta_1}^n, P_{\eta_2}^n \bigr)
:= \int_{\mathcal Y^n} \log \biggl( \frac
{ dP_{\eta_1}^n}{ dP_{\eta_2}^n }
\bigl(y^n \bigr) \biggr) \,dP_{\eta_1}^n
\bigl(y^n \bigr),
\]
and the corresponding variance of the likelihood ratio is denoted by
\[
V_n \bigl( P_{\eta_1}^n, P_{\eta_2}^n
\bigr) := \int_{\mathcal Y^n} \log^2 \biggl(
\frac{ dP_{\eta_1}^n}{ dP_{\eta_2}^n} \bigl(y^n \bigr) \biggr) \,dP_{\eta_1}^n
\bigl(y^n \bigr) -\mathit{KL} \bigl(P_{\eta_1}^n,
P_{\eta_2}^n \bigr)^2.
\]
Let
$\| \cdot\|_2$ and $\langle\cdot, \cdot\rangle_2$ denote respectively
the $L_2$ norm and the associated inner product on $[0,1]$. We use also
$\| \cdot\|_1$ to denote the $L_1$ norm on $[0,1]$.
For all $\beta\geq0$, $\mathcal C^\beta$ denotes the class of $\beta
$-H\"older functions on $[0,1]$ where $\beta=0$ corresponds to the case
of continuous functions. Let $h(f_1, f_2)=(\int_0^1 (\sqrt{f_1}-\sqrt
{f_2})^2 \,d\mu)^{1/2}$ stand for the Hellinger distance between two
densities $f_1$ and $f_2$ relative to a measure $\mu$. For $g$
integrable on $[0,1]$ with respect to Lebesgue measure, we often write
$\int_0^1 g$ or $\int g$ instead of $\int_0^1 g(x)\,dx$. For two
real-valued functions $A, B$ (defined on $\mathbb R$ or on $\mathbb
N$), we write $A\leqa B$ if $A/B$ is bounded and $A\asymp B$ if $|A/B|$
is bounded away from $0$ and $\infty$.


\section{Main result} \label{sec:main}
In this section, we give the general theorem which provides sufficient
conditions on the model, the functional and the prior for BvM to be valid.

We say that the posterior distribution for the functional $\psi(\eta)$
is \textit{asymptotically normal} with centering $\psi_n$ and
variance $V$
if, for $\beta$ the bounded Lipschitz metric (also known as the L\'
evy--Prohorov metric) for weak convergence (see Section~1 in the
supplemental article Castillo and Rousseau \cite
{castillo:rousseau:suppl}, and $\ta_n$ the
mapping $\ta_n:\eta\to\rn(\psi(\eta)-\psi_n))$, it holds, as
$n\to
\infty$, that
%
\begin{equation}
\label{def-bvmtype} \beta \bigl( \Pi \bigl[ \cdot| Y^n \bigr]\circ
\ta_n^{-1}, \cN(0,V) \bigr) \to0,
\end{equation}
in $P_0^n$-probability, which we also denote $\Pi[ \cdot|
Y^n]\circ\ta_n^{-1}
\rightsquigarrow\cN(0,V)$.

In models where an efficiency theory at rate $\rn$ is available, we say
that the posterior distribution for the functional $\psi(\eta)$ at
$\eta
=\eta_0$ \textit{satisfies the BvM theorem} if
\eqref{def-bvmtype} holds with $\psi_n=\hat\psi_n+o_p(1/\rn)$, for
$\hat
\psi_n$ a linear efficient estimator of $\psi(\eta)$ and $V$ the
efficiency bound for estimating $\psi(\eta)$. For instance, for i.i.d.
models and a differentiable functional $\psi$ with efficient influence
function $\tilde\psi_{\eta_0 }$ (see, e.g., \cite{aad98} Chapter~25),
the efficiency bound is attained if $V=P_0^n [ \tilde\psi_{\eta
_0}^2 ]$. Let us now state the assumptions which will be required.

Let $A_n$ be a sequence of measurable sets such that, as $n\to\infty$,
%
\begin{equation}
\label{defan} \Pi \bigl[ A_n| Y^n \bigr] = 1 +
o_p(1).
\end{equation}
We assume that there exists a Hilbert space $({\cal H}, \langle\cdot,
\cdot\rangle_L)$ with associated
norm denoted $\|\cdot\|_L $, and for which the inclusion $A_n- \eta_0
\subset\cal H$ is satisfied for $n$ large enough. Note that we do not
necessarily assume that $S\subset\mathcal H$, as $\mathcal H$ gives a
local description of the parameter space near $\eta_0$ only. Note also
that $\mathcal H$ may depend on $n$. The norm $\| \cdot\|_L$ typically
corresponds to the LAN (locally asymptotically normal) norm as
described in \eqref{LAN} below.

Let us first introduce some notation which corresponds to expanding
both the log-likelihood $\ell_n(\eta):=\ell_n(\eta, Y^n)$ in the model
and the functional
of interest $\psi(\eta)$. Both expansions have remainders
$R_n$ and $r$, respectively. 

\textit{LAN expansion.} 
Write, for all $\eta\in A_n$,
%
\begin{equation}
\label{LAN} \ell_n(\eta) - \ell_n(\eta_0)
= \frac{ -n \Vert \eta-\eta_0\Vert^2_L}{ 2
} + \sqrt{n}W_n(\eta- \eta_0)+
R_n(\eta,\eta_0),
\end{equation}
where [$W_n(h)$, $h \in\mathcal H$] is a collection of real random
variables verifying that, $P_0^n$-almost surely, 
the mapping $h \rightarrow W_n(h)$ is linear, and that for all $h \in
\mathcal H$, we have $W_n(h) \rightsquigarrow\cN(0,\|h\|_L^2)$ as
$n\to
\infty$.

\textit{Functional smoothness.}
Consider
$\psi_0^{(1)} \in\cal H$ and a self-adjoint linear operator $\psi
_0^{(2)}: \cal H \rightarrow\cal H$
and write, for any $\eta\in A_n$,
%
\begin{eqnarray}\label{smooth:psi}
\psi(\eta) &= & \psi( \eta_0) + \bigl\langle\psi_0^{(1)},
\eta-\eta_0 \bigr\rangle_L
\nonumber
\\[-8pt]
\\[-8pt]
\nonumber
&&{} + \tfrac{1}{2} \bigl\langle\psi_0^{(2)}(\eta-
\eta_0),\eta-\eta_0 \bigr\rangle_L+ r(
\eta, \eta_0),
\end{eqnarray}
where there exists a positive constant $C_1$ such that
%
\begin{equation}
\label{cond:dotpsi} %
\bigl\| \psi_0^{(2)} h
\bigr\|_L \leq C_1\|h\|_L\qquad \forall h \in\mathcal
H \quad\mbox{and}\quad \bigl\| \psi_0^{(1)}\bigr\|_L \leq
C_1. %
\end{equation}

Note that both formulations, on the functional smoothness and on the
LAN expansion, are not assumptions since nothing is required yet on
$r(\eta, \eta_0)$ or on $R(\eta, \eta_0)$. This is done in Assumption~\ref{assa}. The norm $\|\cdot\|_L$ is typically identified from a local
asymptotic normality property of the model at the point $\eta_0$. It is
thus intrinsic to the considered statistical model. Next, the expansion
of $\psi$ around $\eta_0$ is in term of the latter norm: since this
norm is intrinsic to the model, this can be seen as a canonical choice.

Consider two cases, depending on the value of $\psi_0^{(2)}$ in \eqref
{smooth:psi}. The first case corresponds to a first-order analysis of
the problem. It ignores any potential nonlinearity in the functional
$\eta\to\psi(\eta)$ by considering a linear approximation with
representer $\psi_0^{(1)}$ in \eqref{smooth:psi} and shifting any
remainder term into $r$.

{\textit{Case}} A1. We set $\psi_0^{(2)}=0$ in \eqref
{smooth:psi} and, for all $\eta\in A_n$ and $t \in\R$ define
%
\begin{equation}
\label{etat1} \eta_t = \eta- \frac{ t \psi_0^{(1)}}{ \sqrt{n}}.
\end{equation}

\textit{Case} A2. We allow for a nonzero second-order
term $\psi_0^{(2)}$ in \eqref{smooth:psi}. In this case, we need a few
more assumptions. One is simply the existence of some posterior
convergence rate in $\|\cdot\|_L$-norm. Suppose that, for some sequence
$\veps_n =o(1)$ and $A_n$ as in \eqref{defan},
%
\begin{equation}
\label{hyp:concentration1} \Pi \bigl[ \eta\in A_n;\| \eta- \eta_0
\|_L \leq\veps_n/2 | Y^n \bigr] = 1 +
o_p(1).
\end{equation}
Next, we assume that the action of the process $W_n$ above can be
approximated by an inner-product, with a representer $w_n$, which will
be particularly useful in defining a suitable path $\eta_t$ enabling to
handle second-order terms.

Suppose that there exists $w_n \in\mathcal H$ such that, for all $h
\in\mathcal H$,
%
\begin{equation}
\label{defwn} W_n(h) = \langle w_n, h
\rangle_L + \Delta_n(h),\qquad \mbox{$P_0^n
$-almost surely,}
\end{equation}
where the remainder term $\Delta_n(\cdot)$ is such that
%
\begin{equation}
\label{zetaAn} \sup_{\eta\in A_n } \bigl\vert\Delta_n \bigl(
\psi_0^{(2)}(\eta- \eta_0) \bigr) \bigr\vert=
o_p(1)
\end{equation}
%
and where one further assumes that
%
\begin{equation}
\label{zetapsi} \bigl\langle w_n, \psi_0^{(2)}
\bigl( \psi_0^{(1)}\bigr) \bigr\rangle_L +
\veps_n \| w_n\|_L = o_p(
\sqrt{n}).
\end{equation}
Finally set, for all $\eta\in A_n$ and $w_n$ as in \eqref{defwn}, for
all $t \in\R$,
%
\begin{equation}
\label{etat2} \eta_t = \eta- \frac{ t \psi_0^{(1)}}{ \sqrt{n}} -
\frac{ t \psi_0^{(2)}
(\eta
- \eta_0) }{ 2 \sqrt{n}} - \frac{ t \psi_0^{(2)}w_n}{ 2n }.
\end{equation}


\renewcommand{\theassumption}{\Alph{assumption}}
\begin{assumption}\label{assa}
In cases {A1} and {A2}, with $\eta_t$ defined by
\eqref{etat1}
and \eqref{etat2}, respectively, assume that for all $t\in\R$, $\eta_t
\in S$ for $n$ large enough and that
%
\begin{equation}
\label{Rn} \sup_{\eta\in A_n }\bigl|t \sqrt{n} r (\eta,
\eta_0) + R_n(\eta,\eta_0) -
R_n(\eta_t, \eta_0)\bigr| = o_p(1).
\end{equation}
\end{assumption}

The suprema in the previous display may not be measurable, in this case
one interprets the previous probability statements in
terms of outer measure.


We then provide a characterisation of the asymptotic distribution of
$\psi(\eta)$. At first read, one may set $\psi_0^{(2)}=0$ in the next
theorem: this provides a first-order result that will be used
repeatedly in Sections \ref{sec:density} and \ref{sec:autoreg}. The
complete statement allows for a second-order analysis via a possibly
nonzero $\psi_0^{(2)}$ and will be applied in Section~\ref{sec:WNmodels}.

\begin{thmm} \label{lem:laplace}
Consider a statistical model $\{P_\eta^n, \eta\in S\}$, a real-valued
functional $\eta\to\psi(\eta)$ and $\langle\cdot, \cdot\rangle_L,
\psi
_0^{(1)}, \psi_0^{(2)}, W_n, w_n$ as defined above.
Suppose that Assumption~\ref{assa} is satisfied, and denote
\[
\hat{\psi} = \psi(\eta_0) + \frac{W_n(\psi_0^{(1)})}{ \sqrt{n} } + \frac
{ \langle w_n, \psi_0^{(2)} w_n\rangle_L}{ 2 n },\qquad
V_{0,n} = \biggl\Vert\psi_0^{(1)} - \frac{ \psi_0^{(2)}w_n }{ 2 \sqrt
{n} }
\biggr\Vert_L^2.
\]
Let $\Pi$ be a prior distribution on $\eta$.
Let $A_n$ be any measurable set such that \eqref{defan} holds. Then for
any real $t$ with $\eta_t$ as in
\eqref{etat2},
%
\begin{equation}\quad
\label{laplace} 
E^\Pi \bigl[ e^{t \sqrt{n} ( \psi(\eta) - \hat{\psi} )} |
Y^n, A_n \bigr] =e^{o_p(1) + {t^2V_{0,n} }/{ 2 }} \frac{ \int_{A_n} e^{\ell_n(\eta_t) - \ell_n(\eta_0) } \,d\Pi(\eta
) }{
\int_{A_n} e^{\ell_n(\eta) - \ell_n(\eta_0) } \,d\Pi(\eta)}.
\end{equation}
%
Moreover, if $V_{0,n}=V_0+o_p(1)$ for some $V_0>0$ and if for some
possibly random sequence of reals $\mu_n$, for any real $t$,
%
\begin{equation}
\label{big:cond} \frac{ \int_{A_n} e^{\ell_n(\eta_t) - \ell
_n(\eta_0) } \,d\Pi(\eta
) }{
\int_{A_n} e^{\ell_n(\eta) - \ell_n(\eta_0) } \,d\Pi(\eta)} = e^{\mu_n t} \bigl(1 +
o_p(1) \bigr),
\end{equation}
then the posterior distribution of $\rn(\psi(\eta)-\hat{\psi})-\mu_n$
is asymptotically normal and mean-zero, with variance $V_{0}$.
\end{thmm}

The proof of Theorem~\ref{lem:laplace} is given in Section~\ref
{sec:pr:lem:laplace}.

\begin{cor}
Under the conditions of Theorem~\ref{lem:laplace}, if \eqref{big:cond}
holds with $\mu_n = o_p(1)$ and $\|\psi_0^{(2)} w_n\|_L = o_p(\sqrt
{n})$, then
the posterior distribution of  $\rn(\psi(\eta)-\hat\psi)$ is
asymptotically mean-zero normal, with variance
$\|\psi_0^{(1)}\|_L^2$.
\end{cor}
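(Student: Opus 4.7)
The plan is to deduce the corollary directly from Theorem \ref{lem:laplace} by showing that the two added hypotheses imply the two remaining hypotheses of the theorem and that the asserted conclusion coincides with the theorem's, up to the negligible shift $\mu_n$.

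First I would verify that $V_{0,n}$ converges in probability to $V_0:=\|\psi_0^{(1)}\|_L^2$ under the hypothesis $\|\psi_0^{(2)}w_n\|_L=o_p(\sqrt{n})$. Expanding the square,
\begin{equation*}
V_{0,n}=\|\psi_0^{(1)}\|_L^2 -\frac{1}{\sqrt{n}}\psg\psi_0^{(1)},\psi_0^{(2)}w_n\psd_L+\frac{1}{4n}\|\psi_0^{(2)}w_n\|_L^2.
\end{equation*}
The last term is $(4n)^{-1}o_p(n)=o_p(1)$. For the cross term, Cauchy--Schwarz and the bound $\|\psi_0^{(1)}\|_L\leq C_1$ from \eqref{cond:dotpsi} give $|\psg\psi_0^{(1)},\psi_0^{(2)}w_n\psd_L|/\sqrt{n}\leq C_1\|\psi_0^{(2)}w_n\|_L/\sqrt{n}=o_p(1)$. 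Hence $V_{0,n}=V_0+o_p(1)$, as required by Theorem \ref{lem:laplace}. (If $\psi_0^{(1)}=0$ the resulting limit degenerates to a point mass at $0$; one may either restrict to $V_0>0$ or verify that the argument below still yields the degenerate conclusion.)

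Next, since \eqref{big:cond} is assumed to hold, Theorem \ref{lem:laplace} applies and yields that the posterior distribution of $\rn(\psi(\eta)-\hat{\psi})-\mu_n$ converges weakly, in $P_0^n$-probability, to $\cN(0,V_0)$. Finally I would remove the shift $\mu_n$. Given $Y^n$, the quantity $\mu_n$ is deterministic, so translation by $\mu_n$ acts on the posterior as a pushforward by a fixed shift; for the bounded Lipschitz metric $\be$ this gives $\be(Q\circ T_{\mu_n}^{-1},Q)\leq |\mu_n|$ for any probability measure $Q$ on $\R$, where $T_c(x)=x+c$. Applied to the posterior of $\rn(\psi(\eta)-\hat\psi)-\mu_n$ and combined with $\mu_n=o_p(1)$, this implies that the posterior of $\rn(\psi(\eta)-\hat\psi)$ has the same weak limit $\cN(0,V_0)$ in $P_0^n$-probability.

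There is no real obstacle: the argument is purely a consistency check that the corollary's hypotheses specialize Theorem \ref{lem:laplace} in the expected way. The only point requiring minimal care is the final Slutsky-type step, where one must remember that the convergence in \eqref{def-bvmtype} is weak convergence of the (random) posterior distribution in $P_0^n$-probability, so the shift must be handled via the bounded Lipschitz metric rather than via joint convergence of random variables.
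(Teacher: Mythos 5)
Your proof is correct and follows exactly the route the paper intends: the corollary is an immediate specialization of Theorem \ref{lem:laplace}, obtained by checking $V_{0,n}=\|\psi_0^{(1)}\|_L^2+o_p(1)$ from the expansion of the square and Cauchy--Schwarz, and by absorbing the shift $\mu_n=o_p(1)$ via the bounded Lipschitz metric (the paper gives no separate proof, treating it as immediate). Your added remarks on the degenerate case $\psi_0^{(1)}=0$ and on handling the shift at the level of the metric rather than by Slutsky are appropriate but not a different method.
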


Assumption~\ref{assa} ensures that the local behaviour of the
likelihood resembles
the one in a Gaussian experiment with norm $\|\cdot\|_L$. An assumption
of this type is expected, as the target distribution in the BvM theorem
is Gaussian. As will be seen in the examples in Sections \ref
{sec:WNmodels}, \ref{sec:density} and \ref{sec:autoreg}, $A_n$ is often
a well chosen subset of a neighbourhood of $\eta_0$, with respect to a
given metric, which need not be the LAN norm $\| \cdot\|_L$.

We note that for simplicity here we restrict to approximating paths
$\eta_t$ to $\eta_0$ in~\eqref{etat1} (first-order results) and
\eqref
{etat2} (second-order results) that are linear in the perturbation.
This covers already quite a few interesting models. More generally,
some models may be locally \emph{curved} around $\eta_0$, with a
possibly nonlinear form of approximating paths. A more general
statement would possibly have an extra condition to control the
curvature. Examining this type of example is left for future work.


The central condition for applying Theorem~\ref{lem:laplace} is \eqref
{laplace}. To check this condition, a possible approach is to construct
a change of parameter from $\eta$ to $\eta_t$ (or some parameter close
enough to $\eta_t$), which leaves the prior and $A_n$ approximately
unchanged. More formally, let $\psi_n$ be an approximation of $\psi
_0^{(1)}$ in a sense to be made precise below and let $\Pi^{\psi
_n}:=\Pi
\circ(\ta^{\psi_n})^{-1}$ denote the image measure of $\Pi$ through
the mapping
\[
\ta^{\psi_n}:\eta\to\eta-t\psi_n/\rn.
\]
To check \eqref{laplace}, one may for instance suppose that the
measures $\Pi^{\psi_n}$ and $\Pi$ are mutually absolutely continuous
and that the density $d\Pi/d\Pi^{\psi_n}$ is close to the quantity
$e^{\mu_n t}$ on $A_n$.
This is the approach we follow for various models and priors in the
sequel. In particular, we prove that a functional change of variable is
possible for various classes of prior distributions. For instance, in
density estimation, Gaussian process priors and piecewise constant
priors are considered and Propositions~\ref{prop:bvm:hist} and~\ref
{thmm-gp} below give a set of sufficient conditions that guarantee
\eqref
{laplace} for each class of priors.

In general, the construction of a \textit{feasible} change of
parameterisation heavily depends on the structure of the prior model.
We note that this change of parameter approach above only provides a
sufficient condition. For some priors, shifted measures may be far from
being absolutely continuous, even using approximations of the shifting
direction: for such priors, one may have to compare the integrals directly.

\begin{rem}
Here, the main focus is on estimation of abstract semiparametric
functionals $\psi(\eta)$. Our results also apply to the case of
separated semiparametric models where $\eta= (\psi,f)$ and $\psi
(\eta
)=\psi\in\R$, as considered in \cite{icbvm}, with a weak convergence
to the normal distribution instead of a strong convergence obtained in
\cite{icbvm}. We have $\psi(\eta)-\psi(\eta_0) = \langle\eta
-\eta
_0,(1,-\ga) \rangle_L/\tilde I_{\eta_0}$ where $\gamma$ is the least
favorable direction and $\tilde I_{\eta_0}=\|(1,-\ga)\|_L^2$; see
\cite{icbvm}.
We can then choose $\psi_0^{(1)} = (1,-\ga) / \tilde I_{\eta_0}$ in
\cite{icbvm}. If $\gamma= 0$ (no loss of information),
$\eta_t = (\psi- t\tilde I_{\eta_0}^{-1}/\sqrt{n}, f)$ and \eqref
{laplace} is satisfied if $\pi= \pi_\psi\otimes\pi_f $ with $\pi
_\psi$
positive and continuous at $\psi(\eta_0)$, so that we obtain a similar
result as Theorem~1 of \cite{icbvm}. In \cite{icbvm}, a slightly weaker
version of condition \eqref{Rn} is considered; however, the proof of
Section~\ref{sec:pr:lem:laplace} can be easily adapted---in the case
of separated semiparametric models---so that the result holds under
the weaker version of \eqref{Rn} as well.
\end{rem}

\begin{rem}\label{com:psiu}
As follows from the proof of Theorem~\ref{lem:laplace}, $\psi
_0^{(1)}$ can be
replaced by any element, say $\tilde\psi$ of $\mathcal H$ such that
\[
\langle\tilde\psi, \eta- \eta_0 \rangle_L = \bigl
\langle \psi^{(1)}_0, \eta- \eta_0 \bigr
\rangle_L,\qquad \| \tilde\psi\|_L = \bigl\|
\psi_0^{(1)} \bigr\|_L,
\]
where $\tilde\psi$ may potentially depend on $\eta$. This proves to be
useful when considering constraint spaces as in the case of density estimation.
\end{rem}


We now apply Theorem~\ref{lem:laplace} in the cases of white noise,
density and autoregressive models and for various types of functionals
and priors.


\section{Applications to the white noise model} \label{sec:WNmodels}
Consider the model
\[
dY^n(t) = f(t)\,dt + n^{-1/2} \,dB(t),\qquad t \in[0,1],
\]
where $f\in L^2[0,1]$ and $B$ is standard Brownian motion. Let $(\phi
_k)_{k\ge1}$ be an orthonormal basis for $L^2[0,1]=:L^2$. The model
can be rewritten
\[
Y_{k} = f_k + n^{-1/2}\epsilon_k,\qquad
f_k = \int_0^1 f(t)
\phi_k(t) \,dt,\qquad \ep_k \sim\mathcal N(0,1)\qquad \mbox{i.i.d.}, k
\ge1.
\]
The likelihood admits a LAN expansion, with $\eta=f$ here, $\|\cdot\|
_L=\|\cdot\|_2$ and $R_n = 0$:
\[
\ell_n(f) - \ell_n(f_0) = -
\frac{n \| f - f_0\|^2 }{2 } +\sqrt{n} W(f- f_0),
\]
where for any $u\in L^2 = \mathcal H$ with coefficients $u_k=\int_0^1
u(t)\phi_k(t)\,dt$, we set $W(u) = \sum_{k \ge1}\epsilon_k u_k $.

In this model, consider the squared-$L^2$ norm as a functional of $f$. Set
\begin{eqnarray*}
\psi(f) &=& \| f\|_2^2 = \psi(f_0) + 2
\langle f_0, f - f_0\rangle_2 + \| f -
f_0\|_2^2,
\\
\psi_0^{(1)} &=& 2 f_0,\qquad \psi_0^{(2)}
h = 2 h, \qquad r(f,f_0)=0.
\end{eqnarray*}
The functional has been extensively studied in the frequentist
literature; see \cite{BR88,efromovitch:low:96,L96,GT99} and \cite{cai:low:2006} to name but a few, as it is used in
many testing problems.
The verification of Assumption~\ref{assa} and of condition \eqref
{big:cond} is prior-dependent and is considered within the proof of the
next theorem.

Suppose that the true function $f_0$ belongs to the Sobolev class
\[
W_{\beta}:= \biggl\{ f\in L^2, \sum
_{k\ge1} k^{2\beta} \langle f,\phi_k
\rangle^2 <\infty \biggr\}
\]
of order $\beta>1/4$. First, one should note that, while the case
$\beta
>1/2$ can be treated using the first-order term of the expansion of the
functional only (case A1), the case $1/4 < \beta< 1/2$
requires the conditions from case A2 as the second-order term
cannot be neglected. This is related to the fact that the so-called
plug-in property in \cite{BR03} does not work for $\beta<1/2$. An
analysis based on second-order terms as in Theorem~\ref{lem:laplace} is
thus required.
The case $\beta\le1/4$ is interesting too, but one obtains a rate
slower than $1/\sqrt{n}$; see, for example, Cai and Low \cite
{cai:low:2006} and
references therein, and a BvM result in a strict sense does not hold.
Although a BvM-type result can be obtained essentially with the tools
developed here, its formulation is more complicated and this case will
be treated elsewhere.
When $\beta> 1/4$, a natural frequentist estimator of $\psi(\eta)$ is
\[
\bar\psi:=\bar\psi_n:= \sum_{k=1}^{K_n}
\biggl[ Y_k^2 - \frac
{1}{n} \biggr] \qquad\mbox{with }
K_n=\lfloor n/\log{n}\rfloor.
\]

Now define a prior $\Pi$ on $f$ by sampling independently
each coordinate $f_k$,
$k\ge1$ in the following way. Given a density $\varphi$ on $\RR$ and a
sequence of positive real numbers $(\sigma_k)$, set $K_n=\lfloor
n/\log
{n}\rfloor$ and
%
\begin{equation}
\label{gw-prior} f_k \sim\frac{1}{\sigma_k}\vphi \biggl(
\frac{\cdot}{\sigma_k} \biggr) \qquad\mbox{if } 1\le k\le K_n\quad \mbox {and}\quad
f_k = 0 \qquad\mbox{if } k>K_n.
\end{equation}
In particular, we focus on the cases where $\varphi$ is either the
standard Gaussian density or $\varphi(x)=\1_{[-\cM,\cM]}(x)/\cM$,
$\cM
>0$, called respectively \emph{Gaussian} $\varphi$ and \emph{Uniform}~$\varphi$.

Suppose that there exists $M>0$ such that, for any $1\le k \le K_n$, 
%
\begin{equation}
 \frac{|f_{0,k}|}{\sigma_k} \le M\quad \mbox{and}\quad \sigma_k\ge\frac
{1}{\sqrt{n}}.
\label{wn-cond-sig} 
\end{equation}
%

\begin{thmm}\label{thmm-wn}
Suppose the true function $f_0$ belongs to the Sobolev space $W_{\beta
}$ of order $\beta>1/4$. Let the prior $\Pi$ and $K_n$ be chosen
according to \eqref{gw-prior} and let $f_0, \{\sigma_k\}$ satisfy
\eqref
{wn-cond-sig}.
Consider the following choices for $\varphi$:
\begin{longlist}[1.]
\item[1.] Gaussian $\varphi$. Suppose that as $n\to\infty$,
%
\begin{equation}
\label{wn-cond-gauss} \frac{1}{\sqrt{n}} \sum_{k=1}^{K_n}
\frac{\sigma_k^{-2}}{n} = o(1).
\end{equation}
\item[2.] Uniform $\varphi$. Suppose $\cM>4 \vee(16M)$ and that for any
$c>0$
%
\begin{equation}
\label{wn-cond-unif} \sum_{k=1}^{K_n}
\sigma_k e^{-cn\sigma_k^2} = o(1).
\end{equation}
%
\end{longlist}
Then, in $P_{f_0}^n$-probability, as $n\to\infty$,
%
\begin{equation}
\label{bvm-l2} \Pi \biggl(\rn \biggl(\psi(f)-\bar\psi-2\frac{K_n}{n}
\biggr) \Big| Y^n \biggr) \rightsquigarrow\cN \bigl(0,4 \|f_0
\|^2_2 \bigr).
\end{equation}
\end{thmm}

The proof of Theorem~\ref{thmm-wn} is given in Section~2.2 of the
supplemental article Castillo and Rousseau \cite
{castillo:rousseau:suppl}. 

Theorem~\ref{thmm-wn} gives the BvM theorem for the nonlinear
functional $\psi(f)=\int f^2$, up to a (known) bias term $2K_n/n$.
Indeed it implies that the posterior distribution of $\psi(f) - \hat
\psi_n = \psi(f) - \bar\psi-2\frac{K_n}{n}$ is asymptotically
Gaussian with mean 0 and variance $4\|f_0\|^2_2/n$ which is the inverse
of the efficient information\vspace*{1pt} (divided by $n$). Recall that $\bar\psi$
is an efficient estimate when $\beta> 1/4$; see, for instance, \cite
{cai:low:2006}. Therefore, even though the posterior distribution of
$\psi(\eta)$ does not satisfy the BvM theorem per se, it can be
modified a posteriori by recentering with the known quantity $2K_n/n$
to lead to a BvM theorem.
The possibility of existence of a Bayesian nonparametric prior leading
to a BvM for the functional $\|f\|_2^2$ without any bias term in
general is unclear.
However, if we restrict our attention to $\beta>1/2$, a different
choice of $K_n$ can be made, in particular $K_n=\sqrt{n}/\log{n}$ leads
to a standard BvM property without bias term.

Condition \eqref{wn-cond-sig} can be interpreted as an undersmoothing
condition: the true function should be at least as ``smooth'' as the
prior; for a fixed prior, it corresponds to intersecting the Sobolev
regularity constraint on $f_0$ with a H\"older-type constraint. It is
used to verify the concentration of the posterior \eqref
{hyp:concentration1}; see Lemma~3 of the supplemental article
(Castillo and Rousseau \cite
{castillo:rousseau:suppl}) (it is used here mostly for simplicity of
presentation and can possibly be slightly improved).
For instance, if $\sigma_k \gtrsim k^{-1/4} $ for all $k \leq K_n$,
then condition \eqref{wn-cond-sig} is valid for all $f_0 \in W_\beta$,
with $\beta> 1/4$.
Conditions \eqref{wn-cond-gauss} and \eqref{wn-cond-unif} are here to
ensure that the prior is hardly modified by the change of
parametrisation \eqref{etat2}, they are verified in particular for any
$\sigma_k \gtrsim k^{-1/4}$.

An interesting phenomenon appears when comparing the two examples of
priors considered in Theorem~\ref{thmm-wn}. If $\sigma_k=k^{-\delta}$, for some $\delta\in\R$,
condition \eqref{wn-cond-gauss} holds for any $\delta\leq1/4$ in the
Gaussian $\varphi$ case, whereas \eqref{wn-cond-unif} only requires
$\delta<1/2$ in the Uniform $\varphi$ case, this for any $f_0$ in
$W_{1/4}$ intersected with the H\"{o}lder-type space $\{f_0:
|f_{0,k}|\le M k^{-\delta}, k\ge1\}$. 
One can conclude that \textit{fine details} of the prior (here, the
specific form of $\varphi$ chosen, for given variances $\{\sigma_k^2\}
$) really matter for BvM to hold in this case. Indeed, it can be
checked that the condition for the Gaussian prior is sharp: while the
proof of Theorem~\ref{thmm-wn} is an application of the general Theorem~\ref{lem:laplace}, a completely different proof can be given for
Gaussian priors using conjugacy, similar in spirit to \cite{kvv11},
leading to \eqref{wn-cond-gauss} as a necessary condition.
Hence, choosing $\sigma_k \gtrsim k^{-1/4}$ leads to a posterior
distribution satisfying the BvM property adaptively over Sobolev balls
with smoothness $\beta>1/4$.

The introduced methodology also allows us to provide conditions under
generic smoothness assumptions on $\varphi$. For instance, if the
density $\varphi$ of the prior is a Lipschitz function on $\RR$, then
the conclusion of Theorem~\ref{thmm-wn} holds when, as $n\to\infty$,
%
\begin{equation}
\label{wn-cond-lip} \sum_{k=1}^{K_n}
\frac{\sigma_k^{-1}}{n} = o(1).
\end{equation}
This last condition is not sharp in general [compare for instance with
the sharp \eqref{wn-cond-gauss} in the Gaussian case], but provides a
sufficient condition for a variety of prior distributions, including
light and heavy tails behaviours. For instance, if $\sigma
_k=k^{-\delta
}$, then \eqref{wn-cond-lip} asks for $\delta\leq0$.




\section{Application to the density model} \label{sec:density}

The case of functionals of the density is another interesting
application of Theorem~\ref{lem:laplace}. The case of linear
functionals of the density has first been considered by \cite
{rivoirard:rousseau:09}. Here, we obtain a broader version of Theorem~2.1 in \cite{rivoirard:rousseau:09}, which weakens the assumptions for
the case of linear functionals and also allows for nonlinear functionals.

\subsection{Statement}
Let $Y^n = (Y_1,\ldots,Y_n)$ be independent and identically
distributed, having density $f$ with respect to Lebesgue measure on the
interval $[0,1]$. In all of this section, we assume that the true
density $f_0$ belongs to the set $\cF_0$ of all densities that are
bounded away from $0$ and $\infty$ on $[0,1]$. Let us consider $A_n =
\{ f ; \| f - f_0 \|_1 \leq\veps_n\}$ where $\veps_n$ is a sequence
decreasing to 0, or any set of the form $A_n \cap\cF_n$, as long as
$P_0^n\Pi(\cF_n^c| Y^{n})
\to0$.
Define
\[
L^2(f_0) = \biggl\{ \vphi:[0,1]\to\RR, \int
_0^1 \vphi(x)^2f_0(x)\,dx
<\infty \biggr\}.
\]
For any $\vphi$ in $L^2(f_0)$, let us write $F_0( \vphi)$ as shorthand
for $\int_0^1 \vphi(x)f_0(x)\,dx$ 
and set, for any positive density $f$ on $[0,1]$,
\[
\eta=\log f,\qquad \eta_0=\log f_0,\qquad h=\rn(\eta-
\eta_0).
\]
Following \cite{rivoirard:rousseau:09}, we have the LAN expansion
\begin{eqnarray*}
\ell_n(\eta)-\ell_n(\eta_0) & =& \rn
F_0(h) + \frac{1}{\rn} \sum_{i=1}^n
\bigl[ h(Y_i) - F_0(h) \bigr]
\\
& =& -\frac{1}{2} \| h \|_L^2 +
W_n(h) + R_n(\eta,\eta_0),
\end{eqnarray*}
with the following notation, for any $g$ in $L^2(f_0)$,
%
\begin{equation}\quad
\label{dens-def} \| g \|_L^2 = \int_0^1
\bigl(g - F_0( g) \bigr)^2 f_0,
W_n(g) =\mathbb{G}_n g= \frac{1} \rn\sum
_{i=1}^n \bigl[g(Y_i)-F_0(g)
\bigr],
\end{equation}
and $R_n(\eta,\eta_0)=\rn P_{f_0}h + \frac{1}{2} \| h \|_L^2$. Note that
$\|\cdot\|_L$ is an Hilbertian norm induced by the inner-product
$\langle
g_1,g_2 \rangle_L =\int g_1 g_2 f_0 $ defined on the space $\mathcal
H_T:=\{g\in L^2(P_{f_0}), \int g f_0 =0\} \subset\mathcal H = L^2(f_0)$,
the so-called maximal tangent set at $f_0$.

We consider functionals $\psi(f)$ of the density $f$, which are
differentiable relative to (a dense subset of) the tangent set
$\mathcal H_T$ with efficient influence function $\tilde{\psi
}_{f_0}$; see \cite
{aad98}, Chapter~25. In particular, $\tilde{\psi}_{f_0}$ belongs to
$\mathcal H_T$,
so $F_0(\tilde{\psi}_{f_0}) = 0$. We further assume that $\tilde
{\psi}_{f_0}$ is \textit
{bounded} on
$[0,1]$. Set
%
\begin{eqnarray}\label{func}
&&\psi(f) -\psi(f_0)\nonumber\\
&&\qquad= \biggl\langle\frac{f-f_0}{f_0} , \tilde{
\psi}_{f_0} \biggr\rangle_L + \tilde r(f,f_0)
\\
&&\qquad = \bigl\langle\eta- \eta_0 - F_0(\eta-
\eta_0) , \tilde{\psi}_{f_0}\bigr\rangle_L +
\mathcal{B}(f,f_0)+ \tilde r(f,f_0),\qquad \eta= \log f,\nonumber
\end{eqnarray}
where $\cB(f,f_0)$ is the difference
\[
\mathcal B(f,f_0) = \int_0^1
\biggl[ \eta-\eta_0-\frac{f-f_0}{f_0} \biggr](x) \tilde{
\psi}_{f_0}(x) f_0(x) \,dx,
\]
%
and define $r(f,f_0) = \mathcal{B}(f,f_0)+ \tilde r(f,f_0)$.

\begin{thmm} \label{thmm-fundensity}
Let $\psi$ be a differentiable functional relative to the tangent set
$\cH_T$, with efficient influence function $\tilde{\psi}_{f_0}$
bounded on $[0,1]$.
Let $\tilde r$ be defined by~\eqref{func}.
Suppose that for some $\veps_n\to0$ it holds
%
\begin{equation}
\label{cond:conc:dens} \Pi \bigl[ f: \|f-f_0\|_1 \le
\veps_n | Y^{n} \bigr] \to1,
\end{equation}
%
in $P_{0}$-probability and that, for $A_n=\{ f, \|f - f_0\|_1 \le\veps
_n\}$,
\[
\sup_{f \in A_n} \tilde r(f,f_0) = o(1/\sqrt{n}).
\]
Set $\eta_t=\eta-\frac{t}{\rn}\tilde{\psi}_{f_0}-\log\int_0^1
e^{\eta-({t}/{\rn})\tilde{\psi}_{f_0}}$
and assume that in $P_0$-probability
%
\begin{equation}
\label{bigcond:dens} \frac{\int_{A_n} e^{\ell_n(\eta_t)-\ell
_n(\eta_0) } \,d\Pi(\eta)}{
\int e^{\ell_n(\eta)-\ell_n(\eta_0)} \,d\Pi(\eta)} \to1.
\end{equation}
%
Then, for $\hat{\psi}$ any linear efficient estimator of $\psi(f)$,
the BvM
theorem holds for the functional $\psi$. That is, the posterior
distribution of $\rn(\psi(f)-\hat{\psi})$ is asymptotically
Gaussian with
mean 0 and variance $\|\tilde{\psi}_{f_0}\|_L^2$, in $P_{0}$-probability.
\end{thmm}

The semiparametric efficiency bound for estimating $\psi$ is $\|\tilde
{\psi}_{f_0}
\|
_L^2$ and linear efficient estimators of $\psi$ are those for which
$\hat{\psi}= \psi(f_0) + \mathbb G_n( \tilde{\psi}_{f_0}) / \sqrt
{n} + o_p( 1/\sqrt
{n})$; see, for example, van der Vaart \cite{aad98}, Chapter~25, so
Theorem~\ref
{thmm-fundensity} yields the BvM theorem (with best possible limit
distribution). 

\begin{rem} \label{rem-h}
The $L^1$-distance between densities in Theorem~\ref{thmm-fundensity}
can be replaced by Hellinger's distance $h(\cdot, \cdot)$ up to
replacing $\veps_n$ by $\veps_n/\sqrt{2}$. 
\end{rem}

Theorem~\ref{thmm-fundensity} is proved in Section~\ref{sec:proofs} and
is deduced from Theorem~\ref{lem:laplace} with $\psi_0^{(2)}= 0$ and
$\psi_0^{(1)} = \tilde{\psi}_{f_0}- t^{-1}\sqrt{n}\log\int_0^1
e^{\eta-({t}/{\rn})\tilde{\psi}_{f_0}}$. The condition\break $ \sup_{f \in A_n} \tilde
r(f,f_0) =
o(1/\sqrt
{n})$, together\vspace*{1pt} with \eqref{cond:conc:dens} imply Assumption~\ref{assa}.
It improves on Theorem~2.1 of \cite{rivoirard:rousseau:09} in the sense
that an $L_1$ -posterior concentration rate is required instead of a
posterior concentration rate in terms of the LAN norm $\|\cdot\|_L$,
it is also a generalisation to approximately linear functionals, which
include the following examples.

\begin{example}[(Linear functionals)] \label{exa-lin}
Let $\psi(f)=\int_0^1 f(x)a(x)\,dx$, for some bounded function $a$. Then,
writing $\int$ as shorthand for $\int_0^1$,
\[
\psi(f) -\psi(f_0) = \biggl\langle\frac{f-f_0}{f_0} , a - \int a
f_0 \biggr\rangle_L
\]
with the efficient influence function $\tilde{\psi}_{f_0}= a - \int a
f_0 $. In this
case, $\tilde r(f,f_0)=0$.
\end{example}

\begin{example}[(Entropy functional)] \label{exa-ent}
Let $\psi(f)=\int_0^1 f(x)\log f(x)\,dx$, for $f$ bounded away from $0$
and infinity. Then
\[
\psi(f) -\psi(f_0) = \biggl\langle\frac{f-f_0}{f_0} , \log
f_0 - \int f_0\log{f_0} \biggr
\rangle_L + \int f\log \frac{f} {f_0}
\]
with the efficient influence function $\tilde{\psi}_{f_0}= \log f_0-
\int f_0 \log
{f_0}$. In this case,
$\tilde r(f,f_0)=\int f\log\frac{f} {f_0}$. For the two types of priors
considered below, under some smoothness assumptions on $f_0$, it holds
$\sup_{f \in A_n} \tilde r(f,f_0)= o(1/\sqrt{n})$.
\end{example}

\begin{example}[(Square-root functional)] \label{exa-sqr}
Let $\psi(f)=\int_0^1 \sqrt{f(x)}\,dx$, for $f$ a bounded density. Then
\[
\psi(f) -\psi(f_0) = \frac{1}{2} \biggl\langle
\frac{f-f_0}{f_0} , \frac{
1}{ \sqrt{f_0}} - \int\sqrt{f_0} \biggr
\rangle_L + \frac{1}{2} \int\frac{ \sqrt{f_0}-\sqrt{f} }{\sqrt{f_0}+\sqrt
{f}}
\frac
{f-f_0}{\sqrt{f_0}}
\]
with the efficient influence function $\tilde{\psi}_{f_0}=\frac{1} 2
(\frac{ 1}{
\sqrt{f_0}}
- \int\sqrt{f_0} )$.
In this case, $\tilde r(f,f_0) = - \int\frac{(\sqrt{f_0}-\sqrt
{f})^2}{2\sqrt{f_0}}$. In particular, the remainder term of the
functional expansion is bounded by a constant times the square of the
Hellinger distance between densities, hence as soon as $\veps_n^2
\sqrt
{n} = o(1)$, if $A_n$ is written in terms of $h$ (see Remark~\ref
{rem-h}), one has $\sup_{f \in A_n} \tilde r(f,f_0)= o(1/\sqrt{n})$.
\end{example}

\begin{example}[(Power functional)] \label{exa-pow}
Let $\psi(f)=\int_0^1 f(x)^q \,dx$, for $f$ a bounded density and $q\ge
2$ an integer. Then
\[
\psi(f) -\psi(f_0) = \biggl\langle\frac{f-f_0}{f_0} ,
qf_0^{q-1}-q \int f_0^q \biggr
\rangle_L + r(f,f_0).
\]
The remainder $\tilde r(f,f_0)$ is a sum of terms of the form $\int
(f-f_0)^{2+s}f_0^{q-2-s}$, for $0\le s\le q-2$ an integer.
For the two types of priors considered below, $\sup_{f \in A_n} \tilde
r(f,f_0)= o(1/\sqrt{n})$, under some smoothness assumptions on $f_0$.
\end{example}

We now consider two families of priors: random histograms and Gaussian
process priors. For each family, we provide a key no-bias condition for
BvM on functionals to be valid. For each, the idea is based on a
certain functional change of variables formula.
To simplify the notation, we write $\tilde\psi=\tilde\psi_{f_0}$ in
the sequel.

\subsection{Random histograms} \label{subsec:dens:hist}

For any $k \in\N^*$, consider the partition of $[0,1]$ defined by
$I_j = [(j-1)/k, j/k)$ for $j =1,\ldots, k$.
Denote by
\[
\cH_k = \Biggl\{g\in L^2[0,1], g(x) = \sum
_{j=1}^k g_j \1_{I_j}(x),
g_j\in\mathbb{R}, j=1,\ldots,k \Biggr\}
\]
the set of all regular histograms with $k$ bins on $[0,1]$. Let
$\mathcal S_k = \{ \omega\in[0,1]^k;\break  \sum_{j=1}^k\omega_j =1\}$ be
the unit simplex in $\R^k$ and denote $\cH_k^1$ the subset of $\cH_k$
consisting of histograms which are densities on $[0,1]$:
\[
\cH_k^1= \Biggl\{f\in L^2[0,1], f(x) =
f_{\omega,k}= k \sum_{j=1}^k
\omega_j \1_{I_j}(x), (\omega_1,\ldots,
\omega_k)\in\cS_k \Biggr\}.
\]
A prior on $\cH_k^1$ is completely specified by the distributions of
$k$ and of $(\omega_1,\ldots,\omega_k)$ given $k$.
Conditionally, on $k$, we consider a Dirichlet prior on $\omega=
(\omega_1,\ldots,\omega_k)$:
%
\begin{equation}
\label{pr:omk} \omega\sim\cD(\alpha_{1,k}, \ldots,
\alpha_{k,k}),\qquad c_1 k^{-a}\le\alpha_{j,k}
\le c_2,
\end{equation}
for some fixed constants $a, c_1,c_2>0$ and any $1\le j\le k$.

Consider two situations: either a deterministic number of bins with $k
= K_n=o(n)$
or, for $\pi_k$ a distribution on positive integers,
%
\begin{equation}
\label{prrk} k \sim\pi_k,\qquad e^{-b_1k \log(k) } \leq
\pi_k(k) \leq e^{-b_2 k
\log(k)},
\end{equation}
for all $k$ large enough and some $0<b_2 < b_1< \infty$. Condition
\eqref{prrk} is verified for instance by the Poisson distribution which
is commonly used in Bayesian nonparametric models; see, for instance,
\cite{arbeletal:13}. 

The set $\cH_k$ is a closed subspace of $L^2[0,1]$. For any function
$h$ in $L^2[0,1]$, consider its projection $h_{[k]}$ in the $L^2$-sense
on $\cH_k$.
It holds
\[
h_{[k]} = k \sum_{j=1}^k
\biggl\{ \int_{I_j} h \biggr\} \1_{I_j}.
\]
%
Lemma~4 in the supplemental article Castillo and Rousseau \cite
{castillo:rousseau:suppl}
gathers useful properties on histograms.


Let the functional $\psi$ satisfy \eqref{func} with bounded efficient
influence function $\tilde{\psi}_{f_0}=\tilde{\psi}\neq0$ and set,
for $k\ge1$,
%
\begin{eqnarray}
\label{def-kfixe} %
\hat\psi_k & = &\psi(f_{0[k]}) +
\frac{ \mathbb G_n \tilde\psi_{[k]}
}{\sqrt{n}}, \qquad V_k = \| \tilde\psi_{[k]}
\|_L^2,
\nonumber
\\[-8pt]
\\[-8pt]
\nonumber
\hat\psi& =& \psi(f_{0}) + \frac{ \mathbb G_n \tilde\psi}{\sqrt
{n}},\qquad V = \| \tilde\psi
\|_L^2, %
\end{eqnarray}
with $\|\cdot\|_L, \mathbb G_n$ as in \eqref{dens-def}.
Finally, for $n\ge2$, $k\ge1$, $M>0$, denote
%
\begin{equation}
\label{def-vepan} A_{n,k}(M)= \bigl\{f\in\cH_k^1,
h(f,f_{0,[k]})\le M\veps_{n,k} \bigr\} \qquad \mbox{with }
\veps_{n,k}^2= \frac{k \log n}{n}.
\end{equation}

In Section~\ref{sec:pr:bvm:hist}, we shall see that the posterior
distribution of $k$ concentrates on a deterministic subset $\mathcal
K_n$ of $\{ 1, \ldots, \lfloor n/(\log n)^2 \rfloor\}$ and that under
the following technical condition on the weights, as $n\to\infty$,
%
\begin{equation}
\label{cond-dir} \sup_{k \in\mathcal K_n} \sum_{j=1}^k
\al_{j,k} = o( \sqrt{n} ), %
\end{equation}
the conditional posterior distribution given $k$, concentrates on the
sets $A_{n,k}(M)$. It can then be checked that
%
\begin{eqnarray*}
& &\Pi \bigl[ \sqrt{n} ( \psi- \hat\psi) \leq z |Y^n \bigr]
\\
&&\qquad= \sum_{k\in\cK_n} \Pi \bigl[k| Y^n \bigr]
\Pi \bigl[ \sqrt{n} ( \psi- \hat\psi_k) \leq z + \sqrt{n}( \hat\psi-
\hat\psi_k) |Y^n,k \bigr] +o_p(1)
\\
&&\qquad= \sum_{k\in\cK_n} \Pi \bigl[k| Y^n \bigr]
\Phi \bigl( \bigl( z + \sqrt{n}( \hat\psi- \hat\psi _k) \bigr)/
\sqrt{V_k} \bigr) + o_p(1).
\end{eqnarray*}
%
The last line expresses that the posterior is asymptotically close to a
mixture of normals, and that the mixture reduces to the target law
$N(0,V)$ if $V_k$ goes to $V$ and $\rn(\hat\psi- \hat\psi_k)$ to
$0$, uniformly for $k$ in $\cK_n$. The last quantity can also be rewritten
%
\begin{eqnarray*}
\sqrt{n}(\hat\psi_{k}-\hat\psi) & =& \rn \bigl(
\psi(f_{0[k]})-\psi(f_0) \bigr) + \mathbb{G}_n (
\tilde\psi_{[k]}-\tilde\psi)
\\
& =& \rn\int(\tilde\psi-\tilde\psi_{[k]}) (f_{0[k]}-f_0)
+ \mathbb{G}_n (\tilde\psi_{[k]}-\tilde\psi) + o(1).
\end{eqnarray*}
%
It is thus natural to ask for, and this is satisfied in most examples
(see below),
%
\begin{equation}\qquad
\label{csimp} \max_{k\in\cK_n} \bigl|\|\tilde\psi_{[k]}
\|_L^2-\|\tilde\psi\|_L^2 \bigr|
=o_p(1)\quad \mbox{and}\quad \max_{k\in\cK_n}
\mathbb{G}_n (\tilde\psi_{[k]}-\tilde\psi) =
o_p(1).
\end{equation}
This leads to the next proposition, proved in Section~\ref{sec:proofs}.

\begin{prop} \label{prop:bvm:hist} 
Let $f_0$ belong to $\cF_0$
and the prior $\Pi$ be defined by \eqref{pr:omk}--\eqref{cond-dir}. Let
the prior $\pi_k$ be either the Dirac mass at $k=K_n \le n/(\log n)^2$,
or the law given in \eqref{prrk}. Let $\cK_n$ be a subset of $\{
1,2,\ldots,n/\log^2{n}\}$
such that $\Pi(\cK_n| Y^n)=1+o_p(1)$.

Consider estimating a functional $\psi(f)$, with $\tilde r$ in \eqref
{func}, verifying \eqref{csimp}
and, for any $M>0$, with $A_{n,k}(M)$ defined in \eqref{def-vepan},
%
\begin{equation}
\label{rem-rand-h} \sup_{k\in\cK_n} \sup_{f\in A_{n,k}(M)}
\sqrt{n} \tilde r(f,f_0)= o_p(1),
\end{equation}
as $n\to\infty$. Additionally, suppose
%
\begin{equation}
\label{hnob} \max_{k \in\cK_n } \rn\biggl \vert\int(\tilde\psi- \tilde
\psi_{[k]}) (f_{0[k]}-f_0) \biggr\vert= o(1).
\end{equation}
Then the BvM theorem for the functional $\psi$ holds.
\end{prop}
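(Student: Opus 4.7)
My plan is to apply Theorem \ref{thm-fundensity} \emph{conditionally on the number of bins} $k$, exploiting that on $\{k=k_0\}$ the prior is supported on $\cH_{k_0}^1$ and parameterised by Dirichlet weights $\omega\in\cS_{k_0}$. Within each such submodel, admissible perturbations of $\log f$ are piecewise-constant on $\{I_j\}_{j\le k_0}$, so the effective efficient influence function is the $L^2$-projection $\tilde\psi_{[k_0]}$ rather than $\tilde\psi$; accordingly the natural conditional centering and variance are $\hat\psi_{k_0}$ and $V_{k_0}$ from \eqref{def-kfixe}, and Remark \ref{com:psiu} legitimates using $\tilde\psi_{[k_0]}$ inside Theorem \ref{lem:laplace}. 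Conditional posterior concentration on $A_{n,k}(M)$ uniformly for $k\in\cK_n$ is a standard Dirichlet-histogram computation made possible by $f_0\in\cF_0$ and the weight assumption \eqref{pr:omk}; together with \eqref{rem-rand-h} and Lemma 4 of the supplement this yields Assumption \textbf{A} conditionally on $k$, in Case \textbf{A1} ($\psi_0^{(2)}=0$).

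The crux is verifying \eqref{bigcond:dens} conditionally on $k$ via a change of variables on $\cS_k$. Define $T_t^k:\cS_k\to\cS_k$ by
\begin{equation*}
T_t^k(\omega)_j = \frac{\omega_j\,e^{-(t/\sqrt n)\tilde\psi_{[k],j}}}{\sum_{l=1}^k \omega_l\,e^{-(t/\sqrt n)\tilde\psi_{[k],l}}}, \qquad \tilde\psi_{[k],j} := k\int_{I_j}\tilde\psi.
\end{equation*}
Since $\tilde\psi_{[k]}$ is constant on each $I_j$, this is exactly the simplex realisation of the functional shift $\eta\mapsto \eta-(t/\sqrt n)\tilde\psi_{[k]}-\log\int e^{\eta-(t/\sqrt n)\tilde\psi_{[k]}}$ on log-histograms, so the numerator of \eqref{bigcond:dens} restricted to $\{k\}$ equals the integral over $A_{n,k}(M)$ of $e^{\ell_n(\eta)-\ell_n(\eta_0)}$ against the pushforward $\Pi_k\circ T_t^{k,-1}$. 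Combining the Dirichlet density ratio at $T_t^k(\omega)$ and $\omega$ with the Jacobian of $T_t^k$, and Taylor-expanding the exponentials to order $n^{-1}$, one obtains a Radon--Nikodym derivative of the form $1+o_p(1)$ uniformly in $\omega$ producing $f\in A_{n,k}(M)$ and uniformly in $k\in\cK_n$; the key ingredients are $F_0(\tilde\psi)=0$, the $L^\infty$-bound on $\tilde\psi$, the weight condition $\sum_j\alpha_{j,k}=o(\sqrt n)$ from \eqref{cond-dir}, and the upper bound $k\le n/\log^2 n$. This delivers \eqref{bigcond:dens}, and Theorem \ref{thm-fundensity} then yields
\begin{equation*}
\Pi[\sqrt n(\psi(f)-\hat\psi_k)\in\cdot\given Y^n,k]\rightsquigarrow \cN(0,V_k)
\end{equation*}
in $P_0$-probability, uniformly in $k\in\cK_n$.

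It remains to recombine. Writing $\sqrt n(\psi(f)-\hat\psi)=\sqrt n(\psi(f)-\hat\psi_k)+\sqrt n(\hat\psi_k-\hat\psi)$ and expanding
\begin{equation*}
\sqrt n(\hat\psi_k-\hat\psi) = \sqrt n\int(\tilde\psi-\tilde\psi_{[k]})(f_{0[k]}-f_0) + \mathbb G_n(\tilde\psi_{[k]}-\tilde\psi) + o(1),
\end{equation*}
the two displayed terms are $o_p(1)$ uniformly in $k\in\cK_n$ by \eqref{hnob} and \eqref{csimp} respectively, while $V_k\to V$ by \eqref{csimp}. Integrating the conditional BvM against $\Pi[k\given Y^n]$ on $\cK_n$ then collapses the mixture of $\cN(0,V_k)$ laws to the target $\cN(0,V)$. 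The main anticipated obstacle is the uniform-in-$k$ control of the Jacobian/Dirichlet ratio above: because $k$ is allowed to grow up to $n/\log^2 n$, second-order contributions of size $k/n$ must be shown to cancel or be absorbed in the $o_p(1)$, which is precisely where $\sum_j\alpha_{j,k}=o(\sqrt n)$ and $\|\tilde\psi\|_\infty<\infty$ become essential.
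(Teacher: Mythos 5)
Your proposal is correct and follows essentially the same route as the paper: a conditional-on-$k$ analysis centered at $f_{0[k]}$ with the projected influence function $\tilde\psi_{[k]}$, the same exponential-tilt change of variables on the Dirichlet simplex (prior density ratio plus Jacobian, with the cancellation driven by $\sum_j\alpha_{j,k}=o(\sqrt n)$, $\|\tilde\psi\|_\infty<\infty$ and $k\le n/\log^2 n$), followed by recombination of the mixture over $k\in\cK_n$ via \eqref{hnob} and \eqref{csimp}. The only nuance is that the paper does not invoke Theorem \ref{thm-fundensity} verbatim but adapts its proof with the LAN expansion re-centered at the misspecified point $f_{0[k]}$ (which is what your conditional step implicitly requires, since $f_0\notin\cH_k^1$), and it performs the recombination at the level of Laplace transforms rather than of the conditional limit laws, which handles the uniformity in $k$ that your sketch asserts.
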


The core condition is \eqref{hnob}, which can be seen as a \emph
{no-bias} condition. Condition \eqref{rem-rand-h} controls the remainder
term of the expansion of $\psi(f)$ around $f_0$.
Condition \eqref{csimp} is satisfied under very mild conditions: for
its first part it is enough that $\inf\cK_n$ goes to $\infty$ with
$n$. For the second part, barely more than this typically suffices,
using a simple empirical process argument; see Section~\ref{sec:proofs}.

The next theorem investigates the previous conditions under
deterministic and random priors on $k$, for the examples of functionals
\ref{exa-lin} to \ref{exa-pow}.

\begin{thmm} \label{bvm:hist} 
Suppose $f_0\in\cC^\beta$, with $\beta>0$.
Let two priors $\Pi_1, \Pi_2$ be defined by \eqref{pr:omk}--\eqref{cond-dir}
and the prior on $k$ be either the Dirac mass at $k=K_n = \lfloor
n^{1/2}(\log n)^{-2}\rfloor$
for $\Pi_1$,
or $k\sim\pi_k$ given by \eqref{prrk} for $\Pi_2$.
Then:
%
\begin{itemize}
\item Example~\ref{exa-lin}, linear functionals $\psi(f) = \int a f$,
under the prior $\Pi_1$ with deterministic $k = K_n$
\begin{itemize}
\item[$\diamond$] if $a(\cdot) \in\mathcal C^\gamma$ with $\gamma+
\beta> 1$ for some $\ga>0$, then the BvM theorem holds for the
functional $\psi(f)$;
\item[$\diamond$] if $a(\cdot) = \1_{\cdot\leq z}$ for $z\in[0, 1]$,
then BvM holds for the functional $\int\1_{\cdot\leq z}f =F(z)$, the
cumulative distribution function of $f$.
\end{itemize}
\item Examples \ref{exa-ent}--\ref{exa-sqr}--\ref{exa-pow}. For all
$\beta>1/2$, the BvM theorem holds for $\psi(f)$ for both priors $\Pi
_1$ (deterministic $k$) and $\Pi_2$ (random $k$).
\end{itemize}
\end{thmm}

Theorem~\ref{bvm:hist} is proved in Section~\ref{sec:pr:bvm:hist}. From
this proof, it may be noted that different choices of $K_n$ in some
range lead to similar results for some examples.
For instance, if $\psi(f) = \int\psi f$ and $\psi\in\mathcal
C^\gamma$, choosing $K_n = \lfloor n/(\log n)^2 \rfloor$ implies that
the BvM holds for all $\gamma+ \beta>1/2$. 

Obtaining BvM in the case of a prior with random $k$ in Example~\ref
{exa-lin} is case-dependent. The answer lies in the respective
approximation properties of both $f_0$ and $\tilde\psi_{f_0}$ through
the prior (note that a random $k$ prior typically adapts to the
regularity of $f_0$), and the no-bias condition \eqref{hnob} may not be
satisfied if $\inf\mathcal K_n$ is not large enough.

We present below a counterexample where BvM is proved to fail for a
large class of true densities $f_0$ when a prior with random $k$ is chosen.

\subsection{A semiparametric curse of adaptation: A counterexample for
BvM under random number of bins histogram priors} \label{counter}
Consider a $\cC^1$, strictly increasing true function $f_0$, say
%
\begin{equation}
\label{ce-fz} f_0'\ge\rho>0 \qquad\mbox{on } [0,1].
\end{equation}
The following reasoning can be extended to any approximately monotone
smooth function on $[0,1]$.
Consider estimation of the linear functional $\psi(f)=\int\psi f$. The
BvM theorem is not satisfied if the bias term $\rn(\hat\psi- \hat
\psi
_k)$ is predominant for all $k$'s which are asymptotically given mass
under the posterior. This will happen if for all such $k$'s,
\[
-b_{n,k}=\rn\int\psi(f_0 - f_{0[k]}) = \rn\int(
\psi- \psi_{[k]}) (f_0 - f_{0[k]}) \gg1,
\]
as $n\to\infty$.
To simplify the presentation, we restrict ourselves to the case of
dyadic random histograms; in other words, the prior on $k$ only puts
mass on values of $k= 2^p$, $p \geq0$. Then define $\psi(x)$ as, for
$\al>0$,
%
\begin{equation}
\label{ce-psi} \psi(x) = \sum_{l\ge0} \sum
_{j=0}^{2^l - 1} 2^{-l({1}/2 + \al)} \psi
_{lj}^H(x),
\end{equation}
where $\psi_{lj}^H(x)=2^{l/2}\psi_{00}(2^lx-j)$ and $\psi_{00}(x)=-\1
_{[0,1/2]}(x)+\1_{(1/2,1]}(x)$ is the mother wavelet of the Haar basis
(we omit the scaling function $1$ in the definition of $\psi$).

\begin{prop} \label{ce}
Let $f_0$ be any function as in \eqref{ce-fz} and $\alpha, \psi$ as in
\eqref{ce-psi}. Let the prior be as in Theorem~\ref{bvm:hist}. Then
there exists $k_1 >0$ such that
\[
\Pi \bigl( k < k_1 (n/\log n)^{1/3} | Y^n
\bigr) = 1 + o_P(1)
\]
and for all $ p\in\N$ such that $ 2^p:=K < k_1 (n/\log n)^{1/3}$,
the conditional
posterior distribution of $\rn(\psi(f)-\hat\psi-b_{n,k})/\sqrt
{V_k}| k=K$ converges in distribution to $\cN(0,1)$, in
$P_0^n$-probability, with
\[
b_{n,K} \leqa-\sqrt{n} K^{-\al-1}.
\]
In particular, the BvM property does not hold if $\al<1/2$.
\end{prop}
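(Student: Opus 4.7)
The argument splits into three pieces: (i) prove the posterior upper tail $\Pi(k<k_1(n/\log n)^{1/3}\given Y^n)=1+o_p(1)$; (ii) obtain a conditional BvM for $\sqrt n(\psi(f)-\hat\psi)-b_{n,K}$ given $k=K$ for each admissible dyadic $K=2^p$; and (iii) prove the bias lower bound $|b_{n,K}|\gtrsim \sqrt n\,K^{-(1+\alpha)}$. Combining the cap in (i) with (iii) yields $|b_{n,K}|\gtrsim n^{(1-2\alpha)/6}(\log n)^{(1+\alpha)/3}\to\infty$ precisely when $\alpha<1/2$, which forces the unconditional posterior of $\sqrt n(\psi(f)-\hat\psi)$ to concentrate away from zero and so rules out the BvM property.

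For (i), I would invoke adaptive Bayesian contraction for the random dyadic histogram prior: since $f_0\in\mathcal C^1$, standard adaptation results (following \cite{arbeletal:13}) give posterior contraction at the minimax rate $\veps_n=(n/\log n)^{-1/3}$, with the posterior mass on $k$ concentrated near the oracle $k^\star\asymp(n/\log n)^{1/3}$. The combination of the exponential prior penalty $e^{-b_2 k\log k}$ of \eqref{prrk} with the parametric entropy of $\cH_k^1$ yields exponentially small posterior mass on $k\geq k_1 k^\star$ once $k_1$ is chosen sufficiently large.

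For (ii), fix a dyadic $K=2^p$ in the admissible range and apply Proposition \ref{prop:bvm:hist} with $\cK_n=\{K\}$. The hypothesis on posterior concentration in $\cH_K^1$ reduces to standard Dirichlet-based parametric concentration, \eqref{rem-rand-h} is trivial for the linear functional ($\tilde r\equiv 0$), and condition \eqref{csimp} reduces to $\|\psi-\psi_{[K]}\|_2^2=\sum_{l\geq p}2^{-2l\alpha}\asymp K^{-2\alpha}\to 0$ together with $\mathbb G_n(\tilde\psi_{[K]}-\tilde\psi)=\mathbb G_n(\psi_{[K]}-\psi)=O_p(K^{-\alpha})=o_p(1)$ as $K\to\infty$ (the regime on which the posterior effectively concentrates). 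The no-bias condition \eqref{hnob} is \emph{not} enforced; instead the mixture identity from the proof of Proposition \ref{prop:bvm:hist} yields
\[ [\sqrt n(\psi(f)-\hat\psi)-b_{n,K}]/\sqrt{V_K}\mid Y^n,k=K\ \rightsquigarrow\ \cN(0,1), \]
via the identification $\sqrt n(\hat\psi_K-\hat\psi)=b_{n,K}+o_p(1)$.

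For (iii), expanding both $\psi$ (see \eqref{ce-psi}) and $f_0-f_{0[K]}$ in the dyadic Haar basis and using orthonormality,
\[ \int(\psi-\psi_{[K]})(f_0-f_{0[K]})=\sum_{l\geq p}\sum_{j=0}^{2^l-1}2^{-l(1/2+\alpha)}\beta_{lj},\quad \beta_{lj}:=\langle f_0,\psi_{lj}^H\rangle. \]
The substitution $x=(j+u)/2^l$ gives $\beta_{lj}=2^{-l/2}\int_0^{1/2}\{f_0((j+u+1/2)/2^l)-f_0((j+u)/2^l)\}du$, and $f_0'\geq\rho>0$ forces $\beta_{lj}\geq c\rho\,2^{-3l/2}$ uniformly in $j$. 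Summing the factor $2^l$ over $j\in\{0,\dots,2^l-1\}$ and the geometric sum over $l\geq p$ produces $-b_{n,K}=\sqrt n\int\psi(f_0-f_{0[K]})\gtrsim\sqrt n\,K^{-(1+\alpha)}$, i.e.\ $b_{n,K}\leqa-\sqrt n K^{-\alpha-1}$. The main obstacle is step (i): while adaptive posterior contraction at rate $(n/\log n)^{-1/3}$ is standard, one needs a sufficiently sharp and \emph{uniform} right-tail control of the posterior on $k$ to rule out bin-counts much larger than the adaptive oracle; steps (ii) and (iii) then reduce to a standard parametric BvM and an explicit Haar-basis calculation.
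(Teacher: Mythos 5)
Your proposal is correct and follows essentially the same route as the paper's own proof: a posterior cap on $k$ at the adaptive level $k_1(n/\log n)^{1/3}$ obtained from the prior penalty on $k$ and the concentration results behind Theorem \ref{bvm:hist}, the within-model conditional BvM taken from the random-histogram machinery of Proposition \ref{prop:bvm:hist} with the recentring $\rn(\hat\psi_K-\hat\psi)=b_{n,K}+\mathbb{G}_n(\tilde\psi_{[K]}-\tilde\psi)$, and the identical Haar-coefficient computation using $f_0'\ge\rho>0$ to get $-b_{n,K}\gtrsim \sqrt{n}K^{-\alpha-1}$, hence failure of BvM for $\alpha<1/2$. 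Your per-$K$ treatment of the empirical-process term (negligible only as $K\to\infty$) is at the same level of rigor as the paper, which simply bounds $\sup_{k\le n^{1/3}}|\mathbb{G}_n(\tilde\psi-\tilde\psi_{[k]})|$ by $O_p(\sqrt{\log n})$, dominated by the diverging bias.
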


\begin{rem}
For the considered $f_0$, it can be checked that the posterior even
concentrates on values of $k$ such that
$k=k_n \asymp(n/\log n)^{1/3}$.
\end{rem}

As soon as the regularities of the functional $\psi(f)$ to be estimated
and of the true function
$f_0$ are fairly different, taking an \textit{adaptive} prior (with
respect to $f$) can have disastrous effects
with a nonnegligible bias appearing in the centering of the posterior
distribution. As in the
counterexample in Rivoirard and Rousseau~\cite{rivoirard:rousseau:09},
the BvM is ruled out
because the posterior distribution concentrates on values of $k$ that
are too small and for which the bias $b_{n,k}$ is not negligible. Note
that for each of these functionals the BvM is violated for a large
class of true densities $f_0$. Some related phenomena in terms of rates
are discussed in Knapik et al. \cite{ksvv12} for linear functionals
and adaptive
priors in white noise inverse problems.

Let us sketch the proof of Proposition~\ref{ce}. It is not difficult to
show that (see the Supplement), since $f_0\in\cC^1$,
the posterior
concentrates on the set
$\{ f: \|f - f_0\|_1 \leq M (n/\log n)^{-1/3}, k \leq k_1 (n/\log
n)^{1/3} \}$, for some positive $M$ and $k_1$.
Since Haar wavelets are special cases of (dyadic) histograms, for any
$K\ge1$ the best approximation of $\psi$ within $\cH_K$ is
\[
\psi_{[K]}(x)=\sum_{l=0}^p
\sum_{j=0}^{2^l - 1} 2^{-l({1}/2 + \al)}
\psi_{lj}^H(x).
\]
The semiparametric bias $-b_{n,K}$ is equal to
$\rn\int_0^1 (f_0 - f_{0,[K]})(\psi- \psi_{[K]})=\rn\int_0^1
f_0(\psi
- \psi_{[K]})$, which can be written,
for any $K \ge1$,
\begin{eqnarray*}
-b_{n,K} & = &\sqrt{n}\sum_{l>p} \sum
_{j=0}^{2^l - 1} 2^{-l({1}/2
+ \al)} \int
_0^1 f_0(x) \psi_{lj}^H(x)
\,dx
\\
& =& \sqrt{n} \sum_{l>p} \sum
_{j=0}^{2^l - 1} 2^{-l \al} \int
_{2^{-l}j}^{2^{-l}(j+1/2)} \bigl(f_0
\bigl(x+2^{-l}/2 \bigr)-f_0(x) \bigr)\,dx
\\
& \geqa&\sqrt{n} \sum_{l>p} 2^{-l\al}
2^{l}2^{-2l} \geqa\sqrt{n} K^{-\al-1}.
\end{eqnarray*}
Since $\Pi(k\le n^{1/3}| Y^n) = 1+o_p(1)$, we have that
$\inf_{k \leq n^{1/3}} -b_{n,k} \rightarrow+\infty$ for all $\al
<1/2$. Also, the sequence of real numbers $\{V_k\}_{k\ge1}$ stays
bounded, while the supremum $\sup_{1\le k\le n^{1/3}} |\mathbb
{G}_n(\tilde\psi-\tilde\psi_{[k]})|$ is bounded by a constant times
$(\log n)^{1/2}$ in probability, by a standard empirical process
argument. This implies that
\[
E^\Pi \bigl[ e^{ t\rn(\psi(f)-\hat\psi) } | Y^n, B_n
\bigr] = \bigl( 1 + o(1) \bigr) \sum_{k\in\cK_n}
e^{ t^2 V_k/2 + t\rn(\hat\psi
-\hat\psi
_k)} \Pi \bigl[k| Y^n \bigr] = o_p(1),
\]
so that the posterior distribution is not asymptotically equivalent to\break
$\cN(0,\|\tilde\psi\|_L^2)$, and there exists $M_n $ going to infinity
such that
\[
\Pi \bigl[\rn|\psi(f)-\hat\psi| > M_n | Y^n \bigr] = 1 +
o_p(1).
\]

\subsection{Gaussian process priors} \label{subsec:dens:gaussian}
We now investigate the implications of Theorem~\ref{thmm-fundensity} in
the case of Gaussian process priors for the density $f$. Consider as a
prior on $f$ the distribution on densities generated by
%
\begin{equation}
\label{def-gp} f(x) = \frac{ e^{W(x)} }{ \int_0^1 e^{W(x)} \,dx },
\end{equation}
where $W$ is a zero-mean Gaussian process indexed by $[0,1]$ with
continuous sample paths.
The process $W$ can also be viewed as a random element in the Banach
space $\mb$ of continuous functions on $[0,1]$ equipped with the
sup-norm $\|\cdot\|_\infty$; see \cite{rkhs} for precise definitions.
We refer to \cite{rkhs,vvvz} and \cite{ic08} for basic
definitions on Gaussian priors and some convergence properties,
respectively. Let $K(x,y)= E[W(x)W(y)]$ denote the covariance kernel of
the process and let $(\mh,\| \cdot\|_{\mathbb H})$ denote the reproducing
kernel Hilbert space of $W$.

\begin{example}[(Brownian motion released at $0$)] Consider the
distribution induced by
\[
W(x) = N + B_x, \qquad x\in[0,1],
\]
where $B_x$ is standard Brownian motion and $N$ is an independent $\cN
(0,1)$ variable.
We use it as a prior on $w$. It can be seen (see \cite{vvvz}) as a
random element in the
Banach space $\mb= (\cC^0,\|\cdot\|_\infty)$ and its RKHS is
\[
\mh^B = \biggl\{ c + \int_0^{\cdot}
g(u)\,du, c\in\RR, g\in L^2[0,1] \biggr\},
\]
a Hilbert space with norm 
given by $ \|c + \int_0^{\cdot} g(u)\,du\|_{\mh^B}^2 = c^2 + \int_0^1
g(u)^2 \,du$.
\end{example}

\begin{example}[(Riemann--Liouville-type processes)] Consider the
distribution induced by,
for $\al>0$ and $x\in[0,1]$,
\[
W^{\al}(x)= \sum_{k=0}^{\lfloor\al\rfloor+1}
Z_k x^k + \int_0^x
(x-s)^{\al-1/2}\,dB_s,
\]
where $Z_i$s are independent standard normal variables and $B$ is an independent
Brownian motion. The RKHS $\mh^\al$ of $W^\al$ can be obtained
explicitly from the one of Brownian motion, and is nothing but a
Sobolev space of order $\alpha+1$; see \cite{vvvz}, Theorem~4.1.
\end{example}

The concentration function of the Gaussian process in $\mb$ at $\eta_0
= \log f_0$ is defined for any $\veps>0$ by (see \cite{rkhs})
\[
\vphi_{\eta_0}(\veps) = -\log\Pi\bigl(\|W\|_{\infty} \le\veps\bigr) +
\frac{1}{2} \inf_{h\in\mh: \|h-\eta_0\|_{\mb}<\veps} \|h\| _{\mh}^2.
\]
In van der Vaart and van Zanten \cite{vvvz}, it is shown that the
posterior contraction rate for
such a prior is closely connected to a solution $\veps_n$ of
%
\begin{equation}
\label{eq-vphi} \vphi_{\eta_0}(\veps_n) \le n
\veps_n^2,\qquad \eta_0=\log f_0.
\end{equation}

\begin{prop} \label{thmm-gp}
Suppose $f_0$ verifies $c_0\leq f_0 \leq C_0$ on $[0,1]$, for some
positive $c_0, C_0$.
Let the prior $\Pi$ on $f$ be induced via a Gaussian process $W$ as in
\eqref{def-gp} and let $\mh$ denote its RKHS. Let $\veps_n\to0$ verify
\eqref{eq-vphi}.
Consider estimating a functional $\psi(f)$, with $\tilde r$ in \eqref
{func} verifying
\[
\sup_{f\in A_n} \tilde r(f,f_0) = o(1/\sqrt{n}),
\]
for $A_n$ such that $\Pi(A_n| Y^n)=1+o_p(1)$ and $A_n\subset\{f:
h(f,f_0)\le\veps_n\}$.
Suppose that $\tilde{\psi}_{f_0}$ is continuous and that there exists
a sequence
$\psi_n \in\mathbb H$ and $\zeta_n \to0$, such that
%
\begin{eqnarray}
\label{approx} \| \psi_n - \tilde{\psi}_{f_0}
\|_\infty&\le& \zeta_n\quad\mbox{and}\quad \|\psi_n
\|_{\mh} \le\rn \zeta_n, 
\\
\rn\veps_n\zeta_n &\to& 0. \label{no-bias}
\end{eqnarray}
Then, for $\hat{\psi}$ any linear efficient estimator of $\psi(f)$, in
$P_{0}^n$-probability, the posterior distribution of $\rn(\psi
(f)-\hat{\psi}
) $ converges to a Gaussian\vspace*{1pt} distribution with mean $0$ and variance $\|
\tilde{\psi}_{f_0}\|_L^2$ and the BvM theorem holds.
\end{prop}

The proof is presented in Section~3.2 of Castillo and Rousseau \cite
{castillo:rousseau:suppl}. 
We now investigate conditions \eqref{approx}--\eqref{no-bias} for
examples of Gaussian priors.

\begin{thmm} \label{thmgp}
Suppose that $\eta_0=\log f_0$ belongs to $\cC^\beta$, for some
$\beta>0$.
Let $\Pi_\alpha$ be the priors defined from a Gaussian process $W$ via
\eqref{def-gp}. For $\Pi_1$, we take $W$ to be
Brownian motion (released at $0$) and for $\Pi_2$ we take $W=W^{\al}$,
a Riemann--Liouville-type process of parameter $\al>0$. 
%
\begin{itemize}
\item Example~\ref{exa-lin}, linear functionals $\psi(f) = \int a f$
\begin{itemize}
\item[$\diamond$] if $a(\cdot) \in\mh^B$, then the BvM theorem holds
for the functional $\psi(f)$ and prior $\Pi_1$. The same holds if
$a(\cdot) \in\mh^\al$ for prior $\Pi_2$;
\item[$\diamond$] if $a(\cdot)\in\cC^\mu$, $\mu>0$, the BvM property
holds for prior $\Pi_2$ if
\[
\alpha\wedge\beta> \tfrac{1}2 + (\al-\mu)\vee0.
\]
\end{itemize}
\item Examples \ref{exa-sqr}--\ref{exa-pow}. Under the same condition as
for the linear functional with $\mu=\beta$, the BvM theorem holds for
$\Pi_2$.
\end{itemize}
\end{thmm}

An immediate illustration of Theorem~\ref{thmgp} is as follows.
Consider prior $\Pi_1$ built from Brownian motion. Then for all linear
functionals
\[
\psi(f) = \int_0^1 x^{r} f(x) \,dx,\qquad
r>\frac{1}{2},
\]
the BvM theorem holds. Indeed, $x\to x^r, r>1/2$ belongs to $\mh^B$.



To prove Theorem~\ref{thmgp}, one applies Proposition~\ref{thmm-gp}: it is enough to compute bounds for $\veps_n$ and $\zeta
_n$. This follows from the results on the concentration function for
Riemann--Louville-type processes obtained in Theorem~4 in \cite{ic08}.
For linear functionals $\psi(f)=\int af$ and $a\in\cC^\mu$, one can
take $\veps_n=n^{-\alpha\wedge\beta/(2\al+1)}$ and $\zeta
_n=n^{-\mu
/(2\al+1)}$, up to some logarithmic factors.
So \eqref{no-bias} holds if $\alpha\wedge\beta>\frac{1}2 + (\al
-\mu
)\vee0$.

The square-root functional is similar to a linear functional with $\mu
=\beta$, since the remainder term in the expansion of the functional is
of the order of the Hellinger distance. Indeed,
since $f_0$ is bounded away from $0$ and $\infty$, the fact that
$w_0\in\cC^\beta$ implies that $f_0\in\cC^{\beta}$ and $\sqrt
{f_0}\in
\cC^{\beta}$.
For power functionals, the remainder term $r(f,f_0)$ is more
complicated but is easily bounded
by a linear combination of terms of the type
\[
\int(f-f_0)^{2+r} f_0^{q-2-r} \le
\|f_0\|_\infty^{q-r-2} \|f-f_0\|
_{\infty}^r \int(f-f_0)^2.
\]
Using Proposition~1 in Castillo and Rousseau \cite
{castillo:rousseau:suppl}, one obtains that,
under the posterior distribution, $\|f-f_0\|_{\infty}\leqa1$ and $\|
f-f_0\|_2\leqa\veps_n$. So, $\rn r(f,f_0)=o(1)$ holds
if $\rn\veps_n^2=o(1)$, which is the case since $\al\wedge\beta>1/2$.

\section{Application to the nonlinear autoregressive model} \label
{sec:autoreg}
Consider an autoregressive model in which one observes $Y_1, \ldots, Y_n$
given by
%
\begin{equation}
\label{autoR:model} Y_{i+1} = f(Y_i) + \epsilon_i,\qquad
\epsilon_i \sim\mathcal N(0, 1) \qquad\mbox{ i.i.d.},
\end{equation}
where $\|f\|_\infty\leq L$ for a fixed given positive constant $L$ and
$f$ belongs to a H\"older space $\mathcal C^\beta$, $\beta>0$.
This example has been in particular studied by \cite{gvni} and it is
known that $(Y_i, i=1,\ldots, n) $ is an homogeneous Markov chain and
that under these assumptions, for all $f$,
there exists a unique stationary distribution $Q_f$ with density $q_f$
with respect to Lebesgue measure. The transition density is
$p_f(y|x)=\phi(y-f(x))$. Denoting
$r(y) = ( \phi( y - L) + \phi( y+L))/2$, the transition density satisfies
$p_f(y|x) \asymp r(y)$ for all $x,y \in\R$. Following \cite{gvni},
define the norms, for any $s\ge2$,
\[
\| f - f_0 \|_{s,r} = \biggl(\int_{\R}
\bigl| f(x) - f_0(x) \bigr|^s r(x) \,dx \biggr)^{1/s}.
\]
%

As in \cite{gvni}, we consider a prior $\Pi$ on $f$ based on piecewise
constant functions. Let us set $ a_n = b \sqrt{\log n}$,
where $b>0$ and consider functions $f$ of the form
\[
f(x) := f_{\omega,k} (x) = \sum_{j=0}^{k-1}
\omega_j \1_{I_j}(x), \qquad I_j= a_n
\bigl( \bigl[j/k, (j+1)/k \bigr] - 1/2 \bigr).
\]
A prior on $k$ and on $\omega=(\omega_0,\ldots,\omega_{k-1})$ is then
specified as follows. First, draw $k\sim\pi_k$, for $\pi_k$ a law on
the integers. Given $k$, the law $\omega| k$ is supposed to have a
Lebesgue density $\pi_{\omega| k}$ with support $[-M,M]^k$ for some
$M>0$. Assume further that these laws satisfy, for
$ 0 <c_2 \leq c_1 < \infty$
and $C_1, C_2>0$,
%
\begin{eqnarray}
\label{def:prior:AR} %
e^{-c_1 K \log K }&\leq&\pi_k[k > K] \leq
e^{-c_2 K\log K}\qquad \mbox{for large } K,
\nonumber
\\[-8pt]
\\[-8pt]
\nonumber
e^{-C_2 k \log k} &\leqa& \pi_{\omega|k} (\omega) \le C_1\qquad
\forall\omega\in[-M,M]^k. %
\end{eqnarray}
We consider the squared-weighted-$L_2$ norm functional
$\psi(f) = \int_{\R} f^2 (y) q_f(y) \,dy $. As before, define
\[
k_n(\beta) = \bigl\lfloor(n/\log n)^{1/(2\beta+ 1)} \bigr\rfloor,\qquad
\veps_n(\beta) = (n/\log n)^{-\beta/(2\beta+1)}.
\]
For all bounded $f_0$ and all $k >0$, define
\[
\tilde\omega_{[k]}^0 = \bigl(\tilde\omega_1^0,
\ldots, \tilde\omega_k^0 \bigr), \qquad \tilde
\omega_j^0 = \frac{ \int_{I_j} f_0(x) q_{f_0}(x) \,dx }{\int_{I_j}
q_{f_0}(x) \,dx };
\]
these are the weights of the projection of $f_0$ on the weighted space
$L^2(q_{f_0})$.
We then have the following sufficient condition for the BvM to be valid.

\begin{thmm}\label{bvm:AR}
Consider the autoregressive model \eqref{autoR:model} and the prior
\eqref{def:prior:AR}. Assume that $f_0 \in\mathcal C^\beta$, with
$\beta>1/2$ and $\|f_0\|_\infty< L$, and assume that $\pi_{\omega|k}$
satisfies for all $t>0$ and all $M_0>0$
%
\begin{equation}
\label{cond:prior:AR} \sup_{\| \omega- \tilde\omega_{[k]}^0\|
_{2,r} \leq M_0 \veps
_n(\beta
)} \biggl\vert\frac{ \pi_{\omega|k}(\omega- t\tilde\omega
^0_{[k]}/\sqrt
{n})}{ \pi_{\omega|k}(\omega)} - 1 \biggr\vert=
o(1).
\end{equation}
Then the posterior distribution of $\sqrt{n}(\psi(f) -\hat\psi) $ is
asymptotically Gaussian with mean $0$ and variance $V_0$, where
\[
\hat\psi= \psi(f_0) + \frac{2}{n}\sum
_{i=1}^n \epsilon_i
f_0(Y_{i-1})+ o_p \bigl(n^{-1/2}
\bigr),\qquad V_0 = 4\|f_0\|_{2,q_{f_0}}^2
\]
and the BvM is valid under the distribution associated to $f_0$ and any
initial distribution $\nu$ on $\R$.
\end{thmm}

Theorem~\ref{bvm:AR} is proved in Section~4 of Castillo and Rousseau
\cite{castillo:rousseau:suppl}.
The conditions on the prior \eqref{def:prior:AR} and \eqref
{cond:prior:AR} are satisfied in particular when $k \sim\mathcal
P(\lambda)$ and when given $k$, the law $\omega| k$ is the independent
product of $k$ laws $\mathcal U(-M,M)$.
Theorem~\ref{bvm:AR} is an application of the general Theorem~\ref
{lem:laplace}, with $A_n = \{ f_{\omega, k}; k \leq k_1 k_n(\beta); \|
\omega- \omega_{[k]}^0\|_{2,r}\leq M_0\varepsilon_n(\beta)\}$ and
Assumption~\ref{assa} implied by $\beta>1/2$. Condition \eqref
{cond:prior:AR} is used to prove condition \eqref{laplace}.

\section{Proofs} \label{sec:proofs}

\subsection{Proof of Theorem \texorpdfstring{\protect\ref{lem:laplace}}{2.1}} \label{sec:pr:lem:laplace}
Let the set $A_n$ be as in Assumption~\ref{assa}.
Set
\[
I_n := E \bigl[ e^{t \sqrt{n} (\psi(\eta) - \psi(\eta_0)) } | Y^n, A_n
\bigr].
\]
For the sake of conciseness, we prove the result in the case where
$\psi_0^{(2)}\neq0$ since the other case is a simpler version of it. Using
the LAN expansion \eqref{LAN} together with the expansion \eqref
{smooth:psi} of the functional $\psi$, one can write
\begin{eqnarray*}
I_n &=& \frac{ \int_{A_n} e^{\sqrt{n}t ( \langle\psi_0^{(1)}, \eta
-\eta
_0\rangle_L +({1}/{2}) \langle\psi_0^{(2)}(\eta-\eta_0), \eta
-\eta
_0\rangle
_L )
+\ell_n(\eta)-\ell_n(\eta_0)+ t\sqrt{n}r(\eta,\eta_0) } \,d\Pi
(\eta)
}{ \int_{A_n} e^{ { -n \Vert \eta-\eta_0\Vert^2_L}/{ 2 } + \sqrt
{n}W_n(\eta- \eta_0)+ R_n(\eta,\eta_0) } \,d\Pi(\eta)}.
\end{eqnarray*}
Consider, for any real number $t$, as defined in \eqref{etat2},
\[
\eta_t = \eta- \frac{ t \psi_0^{(1)}}{ \sqrt{n}} -\frac{
t}{2\sqrt
{n}}
\psi_0^{(2)} (\eta- \eta_0) -
\frac{t \psi_0^{(2)} w_n }{2n}.
\]
Then using \eqref{zetaAn}--\eqref{zetapsi} in Assumption~\ref{assa},
on $A_n$,
\begin{eqnarray*}
&& \ell_n(\eta_t) - \ell_n(
\eta_0) - \bigl(\ell_n(\eta) -\ell_n(\eta
_0) \bigr)
\\
&&\qquad= - \frac{ n}{2} \bigl[ \Vert\eta_t -\eta_0\Vert^2_{ L} - \Vert\eta-\eta_0\Vert^2_{ L} \bigr] + \sqrt{n}\langle w_n ,
\eta_t- \eta\rangle_L + R_n(
\eta_t, \eta_0) \\
&&\qquad\quad{}- R_n( \eta,
\eta_0) + o_P(1)
\\
&&\qquad= - t \bigl\langle w_n , \psi_0^{(1)}+
\psi_0^{(2)}w_n /(2 \sqrt{n}) \bigr
\rangle_L - \frac{
t^2}{2} \biggl\Vert\psi_0^{(1)}+
\frac{\psi_0^{(2)}w_n }{2\sqrt
{n} }\biggr \Vert_L^2\\
&&\qquad\quad{} + \sqrt{n}t \bigl\langle
\psi_0^{(1)}, \eta- \eta_0\bigr
\rangle_L
\\
&&\qquad\quad{} + \frac{t \sqrt{n}}{2}\bigl\langle\psi_0^{(2)}(\eta-
\eta_0) , \eta- \eta_0\bigr\rangle_L +
R_n( \eta_t, \eta_0) - R_n(
\eta, \eta_0) + o_P(1). 
\end{eqnarray*}
One deduces that on $A_n$, from \eqref{Rn} in Assumption~\ref{assa},
\begin{eqnarray*}
&&\sqrt{n}t \biggl( \bigl\langle\psi_0^{(1)}, \eta-
\eta_0\bigr\rangle_L +\frac{1}{2} \bigl\langle
\psi_0^{(2)}(\eta-\eta_0), \eta-
\eta_0\bigr\rangle_L \biggr)+\ell_n(\eta) -
\ell_n(\eta_0) \\
&&\quad{}+ \sqrt{n}t r (\eta, \eta_0)
\\
&&\qquad= \ell_n(\eta_t) - \ell_n(
\eta_0) + t \bigl\langle w_n , \psi_0^{(1)}+
\psi_0^{(2)}w_n /(2 \sqrt{n}) \bigr
\rangle_L \\
&&\qquad\quad{}+\frac{ t^2}{2} \biggl\Vert\psi_0^{(1)}+
\frac
{\psi_0^{(2)}
w_n }{2\sqrt{n} }\biggr \Vert_L^2 +o_P(1).
\end{eqnarray*}
We can then rewrite $I_n$ as
\[
I_n = e^{o_P(1) + \frac{t^2 }{ 2 }\Vert \psi_0^{(1)} +\frac{\psi
_0^{(2)} w_n}{2 \sqrt{n}} \Vert_L^2 + t \langle w_n , \psi_0^{(1)}
+ \frac{\psi_0^{(2)} w_n}{ 2 \sqrt{n}} \rangle_L }
\frac{ \int_{A_n} e^{\ell_n(\eta_t) - \ell_n(\eta_0) } \,d\Pi(\eta
) }{
\int_{A_n} e^{\ell_n(\eta) - \ell_n(\eta_0) } \,d\Pi(\eta)},
\]
and Theorem~\ref{lem:laplace} is proved using condition \eqref
{big:cond}, together with the fact that (see Section~1 of Castillo and
Rousseau \cite
{castillo:rousseau:suppl}), convergence of Laplace transforms for all
$t$ in probability implies convergence in distribution in probability.

\subsection{Proof of Theorem \texorpdfstring{\protect\ref{thmm-fundensity}}{4.1}} \label
{sec:lem:dens}
One can define
$\psi_0^{(1)}= \tilde{\psi}_{f_0}+ c$ for any constant $c$, since
the inner product
associated to the LAN norm corresponds to re-centered quantities. In
particular, for all $\eta= \log f$
\[
\bigl\langle(\tilde{\psi}_{f_0}+ c), \eta- \eta_0 \bigr
\rangle_L = \int (\tilde{\psi}_{f_0}- P_{f_0}
\tilde{\psi}_{f_0}) (\eta- \eta_0) f_0,\qquad \|
\tilde{\psi}_{f_0}+ c \|_L = \|\tilde{
\psi}_{f_0}\|_L.
\]
To check Assumption~\ref{assa}, let us write
%
\begin{equation}
\label{psiu:dens} \psi_0^{(1)}=\tilde{\psi}_{f_0}+
\frac{\sqrt
{n}}{t} \log \biggl( \int_0^1e^{\eta
-
({t}/{\rn})\tilde{\psi}_{f_0}}(x)\,dx
\biggr),
\end{equation}
which depends on $\eta$ but is of the form $\tilde{\psi}_{f_0}+ c$
(see also Remark~\ref{com:psiu}), and we study
$\sqrt{n} t r(\eta, \eta_0) + R_n(\eta, \eta_0) - R_n(\eta_t,
\eta_0)$
using Rivoirard and Rousseau's \cite{rivoirard:rousseau:09} calculations pages 1504--1505.
Indeed, writing $h = \sqrt{n}( \eta- \eta_0)$ we
have
\[
R_n(\eta, \eta_0) - R_n(
\eta_t, \eta_0) = t \langle h, \tilde{
\psi}_{f_0}\rangle_L - \frac
{ t^2 }{ 2 }\|\tilde{
\psi}_{f_0}\|_L^2 + n \log F
\bigl[e^{-t \tilde{\psi}_{f_0}/\sqrt{n}} \bigr]
\]
and expanding the last term as in page 1506 of \cite
{rivoirard:rousseau:09} we obtain that
\begin{eqnarray*}
n \log F \bigl[e^{-t \tilde{\psi}_{f_0}/\sqrt{n}} \bigr]&=& n \log \biggl( 1 - \frac{t }{n}
\langle h, \tilde{\psi}_{f_0}\rangle_L -
\frac{t }{ \sqrt{n} } \mathcal B(f,f_0)+ \frac{ t^2}{2n} \| \tilde{
\psi}_{f_0}\|_L^2
\\
&&{} + \frac{ t^2}{2n}(F-F_0) \bigl(\tilde{\psi}_{f_0}^2
\bigr)+ O \bigl(n^{-3/2} \bigr) \biggr)
\\
&=& - t \langle h, \tilde{\psi}_{f_0}\rangle_L- t
\sqrt{n}\mathcal B(f,f_0) + \frac{
t^2}{2} \| \tilde{
\psi}_{f_0}\|_L^2 \\
&&{}+ O \bigl(\|f
-f_0\|_1 + n^{-1/2} \bigr)
\\
&=&- t \langle h, \tilde{\psi}_{f_0}\rangle_L- t \sqrt{n}
\mathcal B(f,f_0) + \frac{
t^2}{2} \| \tilde{
\psi}_{f_0}\|_L^2 +o(1)
\end{eqnarray*}
since $|(F-F_0)( \tilde{\psi}_{f_0}^2) \leq\|\tilde{\psi}_{f_0}\|
_\infty^2 \|f - f_0\|_1
\lesssim\veps_n$ on $A_n$. Finally, this implies that
$ \sqrt{n} t r(\eta, \eta_0) + R_n(\eta, \eta_0) - R_n(\eta_t,
\eta_0)
= o(1)$ uniformly over $A_n$ and Assumption~\ref{assa} is satisfied.

\subsection{Proof of Theorem \texorpdfstring{\protect\ref{bvm:hist}}{4.2}} \label{sec:pr:bvm:hist}
The first part of the proof consists in establishing that the posterior
distribution on random histograms concentrates (a) given the number of
bins $k$, around the projection $f_{0,[k]}$ of $f_0$, and (b) globally
around $f_0$ in terms of the Hellinger distance.

More precisely, (a) there exist $c,M>0$ such that
%
\begin{equation}
\label{misspecified:1} P_0 \biggl[ \exists k \leq\frac{n}{\log n}; \Pi
\bigl[f\notin A_{n,k}(M) | Y^n, k \bigr] > e^{ - c k \log n}
\biggr] = o(1).
\end{equation}
(b) Suppose $f_0\in\cC^\beta$ with $0<\beta\leq1$.
If $k_n(\beta) = (n/\log n)^{1/(2\beta+1)}$ and $\veps_n(\beta) =
k_n(\beta)^{-\beta}$, then for $k_1,M$ large enough,
%
\begin{equation}
\label{conc:rand} \Pi \bigl[ h(f_0, f) \le M\veps_n(
\beta); k\leq k_1 k_n(\beta) | Y^n \bigr] = 1
+ o_{p}(1).
\end{equation}
Both results are new. As (a)--(b) are an intermediate step and concern
rates rather than BvM per se, their proofs are given in Castillo and
Rousseau \cite
{castillo:rousseau:suppl}.

We now prove that the BvM holds if there exists $\mathcal K_n$ such
that $\Pi(\mathcal K_n|Y^n) = 1 + o_p(1)$, and for which
%
\begin{equation}
\label{bias1} \sup_{k\in\cK_n} \sqrt{n} |\hat\psi-\hat
\psi_k| = o_p(1), \qquad\sup_{k\in\cK_n}
|V_k - V| = o_p(1),
\end{equation}
for all $\psi(f) $ satisfying \eqref{func} with
%
\begin{equation}
\label{rem-rand} \sup_{k \in\mathcal K_n} \sup_{f \in A_{n,k}(M)}
\tilde r (f; f_0) = o_p(1).
\end{equation}
Consider first the deterministic $k=K_n$ number of bins case.
The study of the posterior distribution of $\sqrt{n}( \psi(f) - \hat
\psi)$ is based on a slight modification of the proof of Theorem~\ref
{thmm-fundensity}. Instead of taking the true $f_0$ as basis point for
the LAN expansion, we take instead $f_{0,[k]}$. This enables to write
the main terms in the LAN expansion completely within $\cH_k$.

Let us define $\bar\psi_{(k)}:= \psi_{[k]} - \int\psi_{[k]} f_{0,[k]}
= \tilde\psi_{[k]} -
\int\tilde\psi_{[k]} f_{0,[k]}$ and $\hat\psi_k= \psi( f_{0,[k]}
) +
\frac{1}{\rn}W_n(\bar\psi_{(k)})$.
With the same notation as in Section~\ref{sec:density}, where
indexation by $k$ means that $f_0$ is replaced by $f_{0,[k]}$ [in $\|
\cdot\|_{L,k}, R_{n,k}$, etc., where one can note that for $g\in\cH
_k$, one has $W_{n,k}(g)=W_n(g)$],
\begin{eqnarray*}
&& t\rn \bigl(\psi(f)-\hat\psi_k \bigr) + \ell_n(f)-
\ell_n(f_{0,[k]})
\\
&&\qquad= -\frac{n}2\biggl\|\log\frac{f}{f_{0,[k]}} - \frac{t}{\rn}
\bar\psi_{(k)}\biggr\|_{L,k}^2 \\
&&\qquad\quad{}+ \rn W_n
\biggl(\log\frac{f}{f_{0,[k]}} - \frac{t}{\rn}\bar\psi _{(k)}
\biggr)
\\
&&\qquad\quad{} + \frac{t^2}{2}\|\bar\psi_{(k)}\|_{L,k}^2
+ t\rn\cB_{n,k}+R_{n,k}(f,f_{0,[k]}).
\end{eqnarray*}
Let us set
$f_{t,k}= f e^{-{t\bar\psi_{(k)}}/{\rn}} / F(e^{-{t\bar
\psi_{(k)}}/{\rn}})$.
Then, using the same arguments as in Section~\ref{sec:density},
together with \eqref{misspecified:1} and the fact that $\int\bar\psi
_{(k)} f_{0,[k]}=0$,
\begin{eqnarray*}
&&t\rn \bigl(\psi(f)-\hat\psi_k \bigr) + \ell_n(f)-
\ell_n(f_{0,[k]}) \\
&&\qquad= \frac{t^2}{2}\| \bar
\psi_{(k)} \|_{L,k}^2 + \ell_n(f_{t,k})-
\ell_n(f_{0,[k]}) +o(1),
\end{eqnarray*}
so that choosing $A_{n,k} = \{ \omega\in\mathcal S_k; \| f_{\omega,k}
- f_{0[k]}\|_1 \leq M \sqrt{k \log n/n} \}$, we have
\begin{eqnarray*}
&&E^\Pi \bigl[ e^{ t\rn(\psi(f)-\hat\psi_k) } | Y^n, A_{n,k}
\bigr]\\
&&\qquad = e^{({t^2}/{2})\| \bar\psi_{(k)} \|_{L,k}^2 + o(1)} \times\frac{ \int_{A_{n,k}} e^{\ell_n ( f_{t,k}) - \ell_n( f_{0[k]})
}\,d\Pi
_k( f) }{ \int_{A_{n,k}} e^{\ell_n ( f) - \ell_n( f_{0[k]}) }\,d\Pi
_k( f)},
\end{eqnarray*}
uniformly over $k = o(n/\log n)$.
Within each model $\cH_k$, since $f = f_{\omega, k}$, we can express
$f_{t,k}=k\sum_{j=1}^k \zeta_j \1_{I_j}$, with
%
\begin{equation}
\label{poids} \zeta_j = \frac{ \omega_j \ga_j^{-1}}{
\sum_{j=1}^k \omega_j \ga_j^{-1} },
\end{equation}
where we have set, for $1\le j \le k$,
$\ga_j = e^{ t\bar\psi_{j} /\rn}$, and $\bar\psi_{j} := k\int_{I_j}
\bar\psi_{(k)}$.
Denote $S_{\ga^{-1}}(\omega)=\sum_{j=1}^k \omega_j\ga_j^{-1}$. Note
that \eqref{poids} implies
$ S_{\ga^{-1}}(\omega)=S_{\ga}(\zeta)^{-1}$.
So,
\[
\frac{ \Pi_k( \omega) }{ \Pi_k( \zeta)} = \prod_{j=1}^k
e^{ t
(\alpha
_{j,k} - 1) \bar\psi_j /\sqrt{n} }S_\ga(\zeta)^{-\sum_{j=1}^k
(\alpha
_{j,k} -1)}.
\]
Let $\Delta$ be the Jacobian of the change of variable computed in
Lemma~5 of the supplemental article
(Castillo and Rousseau \cite{castillo:rousseau:suppl}).
Over the set $A_{n,k}$, it holds
\begin{eqnarray*}
&&d\Pi_k( \omega)\\
&&\qquad= \prod_{j=1}^k
e^{ t (\alpha_{j,k} - 1) \bar\psi_j
/\sqrt{n} }S_\ga(\zeta)^{- \sum_{j=1}^k (\alpha_{j,k} -1)} \Delta (\zeta) \,d
\Pi_k(\zeta)
\\
&&\qquad= S_\ga(\zeta)^{- \sum_{j=1}^k \alpha_{j,k} }e^{ t \sum_{j=1}^k
\alpha
_{j,k} \bar\psi_j /\sqrt{n} }
\,d \Pi_k(\zeta)
\\
&&\qquad= e^{ t \sum_{j=1}^k \alpha_{j,k} \bar\psi_j /\sqrt{n} } \biggl( 1 - \frac{t}{\rn} \int_0^1
\bar\psi_{(k)}(f-f_0) + O \bigl(n^{-1} \bigr)
\biggr)^{
\sum
_{j=1}^k \alpha_{j,k} } 
\,d\Pi_k(\zeta),
\end{eqnarray*}
where we have used that
\[
S_{\ga^{-1}}(\omega) = \int_0^1
e^{-t\bar\psi_{(k)}/\rn} f = 1 - \frac
{t}{\rn} \int_0^1
\bar\psi_{(k)}(f-f_0) + O \bigl(n^{-1} \bigr).
\]
Moreover, if
$\| \omega- \omega^0\|_1 \leq M\sqrt{k \log n}/{\sqrt{ n} } $,
\[
\bigl\| \zeta- \omega^0\bigr\|_1 \leq M \sqrt{k \log n}/{
\sqrt{n} } + \frac{2
|t|\| \tilde\psi\|_\infty}{\sqrt{n}} \leq(M+1) \frac{\sqrt{k
\log
n}}{ \sqrt{n}}
\]
and vice versa. Hence, choosing $M$ large enough (independent of $k$)
such that
\[
\Pi \bigl[ \bigl\| \omega- \omega^0\bigr\|_1 \leq(M-1)\sqrt{k\log
n/n} |Y^n,k \bigr] = 1 + o_p(1)
\]
implies that if $\sum_{j=1}^k \alpha_j = o(\sqrt{n})$, 
noting $\| \bar\psi_{(k)} \|_{L,k}=
\| \tilde\psi_{[k]} \|_{L}$,
%
\begin{equation}
\label{intermk} E^\Pi \bigl[ e^{ t\rn(\psi(f)-\hat\psi_k) } | Y^n,
A_{n,k} \bigr] = e^{ t^2\| \tilde\psi_{[k]} \|_L^2/2 } \bigl( 1 + o(1) \bigr).
\end{equation}
The last estimate is for the restricted distribution $\Pi[\cdot|
Y^n, A_{n,k}]$, but \eqref{misspecified:1} implies that the
unrestricted version also follows.
Since $\|\tilde\psi\|_L^2$ is the efficiency bound for estimating
$\psi
$ in the density model, \eqref{bias1} follows.

Now we turn to the random $k$ case. The previous proof can be
reproduced $k$ by $k$, that is, one decomposes the posterior $\Pi
[\cdot
| Y^n, B_n]$, for $B_n=\bigcup_{1\le k\le n} A_{n,k} \cap\{
f=f_{\omega,k}, k\in\cK_n\}$, into the mixture of the laws $\Pi
[\cdot
| Y^n, B_n, k]$ with weights $\Pi[k| Y^n]$.
Combining the assumption on $\cK_n$ and \eqref{misspecified:1} yields
$\Pi[B_n| Y^n]=1+o_p(1)$. Now notice that in the present context
\eqref{intermk} becomes
\begin{eqnarray*}
E^\Pi \bigl[ e^{ t\rn(\psi(f)-\hat\psi_k) } | Y^n, B_n, k
\bigr] &=& E^\Pi \bigl[ e^{ t\rn(\psi(f)-\hat\psi_k) } | Y^n,
A_{n,k}, k \bigr]
\\
& =& e^{ t^2\| \tilde\psi_{[k]} \|_L^2/2 } \bigl( 1 + o(1) \bigr),
\end{eqnarray*}
where it is important to note that the $o(1)$ is uniform in $k$. This
follows from the fact that the proof in the deterministic case holds
for any given $k$ less than $n$ and any dependence in $k$ has been made
explicit in that proof. Thus,
\begin{eqnarray*}
E^\Pi \bigl[ e^{ t\rn(\psi(f)-\hat\psi) } | Y^n, B_n
\bigr] &=& \sum_{k\in\cK_n}E^\Pi \bigl[
e^{ t\rn(\psi(f)-\hat\psi_k) } | Y^n, A_{n,k}, k \bigr] \Pi \bigl[k|
Y^n \bigr]
\\
& =& \bigl( 1 + o(1) \bigr) \sum_{k\in\cK_n}
e^{ t^2 V_k/2 + t\rn(\hat\psi
_k-\hat
\psi)} \Pi \bigl[k| Y^n \bigr].
\end{eqnarray*}
Using \eqref{bias1} together with the continuous mapping theorem for
the exponential function yields that the last display converges in
probability to $e^{t^2 V/2}$ as $n\to\infty$, which leads to the BvM theorem.\vadjust{\goodbreak}

We apply this to the four examples. First, in the case of Example~\ref
{exa-lin} with deterministic $k = K_n$, we have by definition that
$\tilde r (f,f_0) = 0$ and $\sqrt{n}( \hat\psi_{K_n} - \hat\psi) =
b_{n,K_n}+ o_p(1)$ with $b_{n,K_n} = O(\sqrt{n}K_n^{-\beta- \gamma})
=o(1)$ if $\beta+ \gamma> 1$, when $a \in\mathcal C^\gamma$. On the
other hand, if $a(x) = \1_{x\leq z}$, for all $\beta>0$,
\[
|b_{n,K_n}| \lesssim\sqrt{n} \biggl\vert\int_{\lfloor K_nz\rfloor/K_n}^z
\bigl(f_0(x) - k w^0_{\lfloor K_nz\rfloor} \bigr)\,dx \biggr\vert= O
\bigl(\sqrt{n}K_n^{-(\beta+ 1)} \bigr)=o(1).
\]

We now verify \eqref{bias1} together with \eqref{rem-rand} for Examples
\ref{exa-ent}, \ref{exa-sqr} and \ref{exa-pow}. We present the proof in
the case Example~\ref{exa-ent}, since the other two are treated
similarly. Set, in the random $k$ case
\[
\cK_n= \bigl\{ k\in \bigl[1,k_1 k_n(\beta)
\bigr], \exists f\in\cH_{k}^1, h(f, f_0) \leq
M\varepsilon_n(\beta) \bigr\},
\]
for some $k_1, M$ large enough so that $\Pi[\cK_n| Y^n]=1+o_p(1)$
from \eqref{misspecified:1}, with $\varepsilon_n(\beta) = (n/\log
n)^{-\beta/(2 \beta+ 1)}$. For $\beta>1/2$, note that
$k \veps_{n,k}^2 \leqa k \veps_n(\beta)^2 = o(1)$, uniformly over
$k\leqa k_n(\beta)$. In the deterministic case, simply set $\mathcal
K_n = \{K_n\}$.

First, observe that for $k\in\cK_n$, the elements of the set $\{f \in
\cH_k^1, h(f, f_0) \leq M \veps_n(\beta)\}$ are bounded away from $0$
and $\infty$. Indeed, since this is true for $f_0$, writing the
Hellinger distance as a sum over the various bins leads to $\sqrt{f(x)}
\geq\sqrt{c_0} - \veps_{n,k} \sqrt{k}$ which implies that $f(x)
\geq
c_0/2$ for $n$ large enough, since $k \veps_n^2 = o(1)$. Similarly, $\|
f\|_\infty\leq2 \|f_0\|_\infty$ for $n$ large. Now, by writing $\log
(f/f_0)=1+ (f-f_0)/f_0+\rho(f-f_0)$, and using that $f/f_0$ is bounded
away from $0$ and $\infty$, one easily checks that $|\tilde r(f,f_0)|$
in Example~\ref{exa-ent} is bounded from above by a multiple of $\int_0^1 (f-f_0)^2$, which itself is controlled by $h(f,f_0)^2$ for $f,f_0$
as before. Also $\rn\veps_{n,k}^2=o(1)$ when $\beta>1/2$, which implies
\eqref{rem-rand}. It is easy to adapt the above computations to the
case where $k=K_n = O(\sqrt{n}/(\log n)^2 )$.

Next, we check condition \eqref{bias1}. Since $\tilde\psi= \log f_0 -
\psi(f_0)$, under the deterministic $k$-prior with $k=K_n = \lfloor
n^{1/2}(\log n)^{-2}\rfloor$ and $\beta>1/2$,
\[
\biggl\vert\int_0^1\tilde \psi(f_0 -
f_{0[k]}) \biggr\vert= \biggl\vert\int_0^1(\tilde
\psi-\tilde\psi_{[k]}) (f_0 - f_{0[k]}) \biggr\vert
\lesssim h^2(f_0, f_{0[k]})= o(1/\sqrt{n}).
\]
In that case, the posterior distribution of $\rn(\psi(f)-\hat\psi)$ is
asymptotically Gaussian with mean $0$ and variance $\| \tilde\psi\|
_L^2$, so the BvM theorem is valid.

Under the random $k$-prior, 
recall from the reasoning above that any $f$ with $h(f, f_0) \leq
M\veps_n(\beta)$ is bounded from below and above, so the Hellinger and
$L^2$-distances considered below are comparable. For a given $k\in\cK
_n$, by definition there exists $f^*_k\in\cH_k^1$ with $h(f_0, f^*_k)
\leq M \veps_n(\beta)$, so
using \eqref{conc:rand},
%
\begin{eqnarray*}
h^2(f_0, f_{0[k]})& \lesssim&\int
_0^1 (f_0 - f_{0[k]})^2
(x)\,dx \leq\int_0^1 \bigl(f_0 -
f_{k}^* \bigr)^2 (x) \,dx \lesssim h^2
\bigl(f_0, f_k^* \bigr) \\
&\lesssim&\veps_{n}^2(
\beta).
\end{eqnarray*}
%
This implies, using the same bound as in the deterministic-$k$ case,
\[
F_0 \bigl((\tilde\psi_{[k]}- \tilde\psi)^2
\bigr) \lesssim h(f_0, f_{0[k]})^2 = O \bigl(
\veps_{n}^2(\beta) \bigr),
\]
and that $\vert F_0(\tilde\psi_{[k]}^2 ) - F_0(\tilde\psi
^2)\vert
=o(1)$, uniformly over $k \in\cK_n$. To control the empirical process
part of \eqref{bias1},
that is the second part of \eqref{csimp}, one uses, for example, Lemma~19.33 in \cite{aad98}, which provides an upper-bound for the maximum,
together with the last display.
So, for random $k$, the BvM theorem is satisfied if $\beta>1/2$.


\begin{supplement}[id=suppA]
\stitle{Supplement to ``A Bernstein--von Mises theorem for smooth functionals in
semiparametric models''}
\slink[doi]{10.1214/15-AOS1336SUPP} 
\sdatatype{.pdf}
\sfilename{aos1336\_supp.pdf}
\sdescription{In the supplementary material, we state and prove several technical
results used in the paper and provide the remaining proofs.}
\end{supplement}


\begin{thebibliography}{34}

\bibitem{arbeletal:13}
%
\begin{barticle}[mr]
\bauthor{\bsnm{Arbel},~\bfnm{Julyan}\binits{J.}},
\bauthor{\bsnm{Gayraud},~\bfnm{Ghislaine}\binits{G.}} \AND
\bauthor{\bsnm{Rousseau},~\bfnm{Judith}\binits{J.}}
(\byear{2013}).
\btitle{Bayesian optimal adaptive estimation using a sieve prior}.
\bjournal{Scand. J. Stat.}
\bvolume{40}
\bpages{549--570}.
\bid{doi={10.1002/sjos.12002}, issn={0303-6898}, mr={3091697}}
\end{barticle}
%
\bptok{imsref}%
\endbibitem

\bibitem{berger}
%
\begin{bbook}[mr]
\bauthor{\bsnm{Berger},~\bfnm{James~O.}\binits{J.~O.}}
(\byear{1985}).
\btitle{Statistical Decision Theory and {B}ayesian Analysis},
\bedition{2nd} ed.
\bpublisher{Springer},
\blocation{New York}.
\bid{doi={10.1007/978-1-4757-4286-2}, mr={0804611}}
\end{bbook}
%
\bptok{imsref}%
\endbibitem

\bibitem{bickel:kleijn}
%
\begin{barticle}[mr]
\bauthor{\bsnm{Bickel},~\bfnm{P.~J.}\binits{P.~J.}} \AND
\bauthor{\bsnm{Kleijn},~\bfnm{B.~J.~K.}\binits{B.~J.~K.}}
(\byear{2012}).
\btitle{The semiparametric {B}ernstein--von {M}ises theorem}.
\bjournal{Ann. Statist.}
\bvolume{40}
\bpages{206--237}.
\bid{doi={10.1214/11-AOS921}, issn={0090-5364}, mr={3013185}}
\end{barticle}
%
\bptok{imsref}%
\endbibitem

\bibitem{BR88}
%
\begin{barticle}[mr]
\bauthor{\bsnm{Bickel},~\bfnm{P.~J.}\binits{P.~J.}} \AND
\bauthor{\bsnm{Ritov},~\bfnm{Y.}\binits{Y.}}
(\byear{1988}).
\btitle{Estimating integrated squared density derivatives: Sharp best
order of convergence estimates}.
\bjournal{Sankhy\=a Ser. A}
\bvolume{50}
\bpages{381--393}.
\bid{issn={0581-572X}, mr={1065550}}
\end{barticle}
%
\bptok{imsref}%
\endbibitem

\bibitem{BR03}
%
\begin{barticle}[mr]
\bauthor{\bsnm{Bickel},~\bfnm{Peter~J.}\binits{P.~J.}} \AND
\bauthor{\bsnm{Ritov},~\bfnm{Ya'acov}\binits{Y.}}
(\byear{2003}).
\btitle{Nonparametric estimators which can be ``plugged-in''}.
\bjournal{Ann. Statist.}
\bvolume{31}
\bpages{1033--1053}.
\bid{doi={10.1214/aos/1059655904}, issn={0090-5364}, mr={2001641}}
\end{barticle}
%
\bptok{imsref}%
\endbibitem

\bibitem{bontemps:11a}
%
\begin{barticle}[mr]
\bauthor{\bsnm{Bontemps},~\bfnm{Dominique}\binits{D.}}
(\byear{2011}).
\btitle{Bernstein--von {M}ises theorems for {G}aussian regression with
increasing number of regressors}.
\bjournal{Ann. Statist.}
\bvolume{39}
\bpages{2557--2584}.
\bid{doi={10.1214/11-AOS912}, issn={0090-5364}, mr={2906878}}
\end{barticle}
%
\bptok{imsref}%
\endbibitem

\bibitem{boucheron:gassiat:10}
%
\begin{barticle}[mr]
\bauthor{\bsnm{Boucheron},~\bfnm{S.}\binits{S.}} \AND
\bauthor{\bsnm{Gassiat},~\bfnm{E.}\binits{E.}}
(\byear{2009}).
\btitle{A {B}ernstein--von {M}ises theorem for discrete probability
distributions}.
\bjournal{Electron. J. Stat.}
\bvolume{3}
\bpages{114--148}.
\bid{doi={10.1214/08-EJS262}, issn={1935-7524}, mr={2471588}}
\end{barticle}
%
\bptok{imsref}%
\endbibitem

\bibitem{cai:low:2006}
%
\begin{barticle}[mr]
\bauthor{\bsnm{Cai},~\bfnm{T.~Tony}\binits{T.~T.}} \AND
\bauthor{\bsnm{Low},~\bfnm{Mark~G.}\binits{M.~G.}}
(\byear{2006}).
\btitle{Optimal adaptive estimation of a quadratic functional}.
\bjournal{Ann. Statist.}
\bvolume{34}
\bpages{2298--2325}.
\bid{doi={10.1214/009053606000000849}, issn={0090-5364}, mr={2291501}}
\end{barticle}
%
\bptok{imsref}%
\endbibitem

\bibitem{ic08}
%
\begin{barticle}[mr]
\bauthor{\bsnm{Castillo},~\bfnm{Isma{\"e}l}\binits{I.}}
(\byear{2008}).
\btitle{Lower bounds for posterior rates with {G}aussian process priors}.
\bjournal{Electron. J. Stat.}
\bvolume{2}
\bpages{1281--1299}.
\bid{doi={10.1214/08-EJS273}, issn={1935-7524}, mr={2471287}}
\end{barticle}
%
\bptok{imsref}%
\endbibitem

\bibitem{icbvm}
%
\begin{barticle}[mr]
\bauthor{\bsnm{Castillo},~\bfnm{Isma{\"e}l}\binits{I.}}
(\byear{2012}).
\btitle{A semiparametric {B}ernstein--von {M}ises theorem for
{G}aussian process priors}.
\bjournal{Probab. Theory Related Fields}
\bvolume{152}
\bpages{53--99}.
\bid{doi={10.1007/s00440-010-0316-5}, issn={0178-8051}, mr={2875753}}
\end{barticle}
%
\bptok{imsref}%
\endbibitem

\bibitem{icsan}
%
\begin{barticle}[mr]
\bauthor{\bsnm{Castillo},~\bfnm{Isma{\"e}l}\binits{I.}}
(\byear{2012}).
\btitle{Semiparametric {B}ernstein--von {M}ises theorem and bias,
illustrated with {G}aussian process priors}.
\bjournal{Sankhy\=a}
\bvolume{74}
\bpages{194--221}.
\bid{doi={10.1007/s13171-012-0008-6}, issn={0976-836X}, mr={3021557}}
\end{barticle}
%
\bptok{imsref}%
\endbibitem

\bibitem{ic13}
%
\begin{barticle}[mr]
\bauthor{\bsnm{Castillo},~\bfnm{Isma{\"e}l}\binits{I.}}
(\byear{2014}).
\btitle{On {B}ayesian supremum norm contraction rates}.
\bjournal{Ann. Statist.}
\bvolume{42}
\bpages{2058--2091}.
\bid{doi={10.1214/14-AOS1253}, issn={0090-5364}, mr={3262477}}
\end{barticle}
%
\bptok{imsref}%
\endbibitem

\bibitem{CN12}
%
\begin{barticle}[mr]
\bauthor{\bsnm{Castillo},~\bfnm{Isma{\"e}l}\binits{I.}} \AND
\bauthor{\bsnm{Nickl},~\bfnm{Richard}\binits{R.}}
(\byear{2013}).
\btitle{Nonparametric {B}ernstein--von {M}ises theorems in {G}aussian
white noise}.
\bjournal{Ann. Statist.}
\bvolume{41}
\bpages{1999--2028}.
\bid{doi={10.1214/13-AOS1133}, issn={0090-5364}, mr={3127856}}
\end{barticle}
%
\bptok{imsref}%
\endbibitem

\bibitem{cn14}
%
\begin{barticle}[mr]
\bauthor{\bsnm{Castillo},~\bfnm{Isma{\"e}l}\binits{I.}} \AND
\bauthor{\bsnm{Nickl},~\bfnm{Richard}\binits{R.}}
(\byear{2014}).
\btitle{On the {B}ernstein--von {M}ises phenomenon for nonparametric
{B}ayes procedures}.
\bjournal{Ann. Statist.}
\bvolume{42}
\bpages{1941--1969}.
\bid{doi={10.1214/14-AOS1246}, issn={0090-5364}, mr={3262473}}
\end{barticle}
%
\bptok{imsref}%
\endbibitem

\bibitem{castillo:rousseau:suppl}
%
\begin{bmisc}[author]
\bauthor{\bsnm{Castillo},~\bfnm{I.}\binits{I.}} \AND
\bauthor{\bsnm{Rousseau},~\bfnm{J.}\binits{J.}}
(\byear{2015}).
\bhowpublished{Supplement to ``A Bernstein--von Mises theorem for
smooth functionals in
semiparametric models.''
DOI:\doiurl{10.1214/\\15-AOS1336SUPP}}.
\bptok{imsref}%
\end{bmisc}
%
\endbibitem

%

\bibitem{C93}
%
\begin{barticle}[mr]
\bauthor{\bsnm{Cox},~\bfnm{Dennis~D.}\binits{D.~D.}}
(\byear{1993}).
\btitle{An analysis of {B}ayesian inference for nonparametric regression}.
\bjournal{Ann. Statist.}
\bvolume{21}
\bpages{903--923}.
\bid{doi={10.1214/aos/1176349157}, issn={0090-5364}, mr={1232525}}
\end{barticle}
%
\bptok{imsref}%
\endbibitem

\bibitem{deblasi09}
%
\begin{barticle}[mr]
\bauthor{\bsnm{De Blasi},~\bfnm{Pierpaolo}\binits{P.}} \AND
\bauthor{\bsnm{Hjort},~\bfnm{Nils~Lid}\binits{N.~L.}}
(\byear{2009}).
\btitle{The {B}ernstein--von {M}ises theorem in semiparametric
competing risks models}.
\bjournal{J. Statist. Plann. Inference}
\bvolume{139}
\bpages{2316--2328}.
\bid{doi={10.1016/j.jspi.2008.10.018}, issn={0378-3758}, mr={2507993}}
\end{barticle}
%
\bptok{imsref}%
\endbibitem

\bibitem{efromovitch:low:96}
%
\begin{barticle}[mr]
\bauthor{\bsnm{Efromovich},~\bfnm{Sam}\binits{S.}} \AND
\bauthor{\bsnm{Low},~\bfnm{Mark}\binits{M.}}
(\byear{1996}).
\btitle{On optimal adaptive estimation of a quadratic functional}.
\bjournal{Ann. Statist.}
\bvolume{24}
\bpages{1106--1125}.
\bid{doi={10.1214/aos/1032526959}, issn={0090-5364}, mr={1401840}}
\end{barticle}
%
\bptok{imsref}%
\endbibitem

\bibitem{Free99}
%
\begin{barticle}[mr]
\bauthor{\bsnm{Freedman},~\bfnm{David}\binits{D.}}
(\byear{1999}).
\btitle{On the {B}ernstein--von {M}ises theorem with
infinite-dimensional parameters}.
\bjournal{Ann. Statist.}
\bvolume{27}
\bpages{1119--1140}.
\bid{doi={10.1214/aos/1017938917}, issn={0090-5364}, mr={1740119}}
\end{barticle}
%
\bptok{imsref}%
\endbibitem

\bibitem{GT99}
%
\begin{barticle}[mr]
\bauthor{\bsnm{Gayraud},~\bfnm{Ghislaine}\binits{G.}} \AND
\bauthor{\bsnm{Tribouley},~\bfnm{Karine}\binits{K.}}
(\byear{1999}).
\btitle{Wavelet methods to estimate an integrated quadratic
functional: Adaptivity and asymptotic law}.
\bjournal{Statist. Probab. Lett.}
\bvolume{44}
\bpages{109--122}.
\bid{doi={10.1016/S0167-7152(98)00296-X}, issn={0167-7152}, mr={1706448}}
\end{barticle}
%
\bptok{imsref}%
\endbibitem

\bibitem{ghosal00}
%
\begin{barticle}[mr]
\bauthor{\bsnm{Ghosal},~\bfnm{Subhashis}\binits{S.}}
(\byear{1999}).
\btitle{Asymptotic normality of posterior distributions in
high-dimensional linear models}.
\bjournal{Bernoulli}
\bvolume{5}
\bpages{315--331}.
\bid{doi={10.2307/3318438}, issn={1350-7265}, mr={1681701}}
\end{barticle}
%
\bptok{imsref}%
\endbibitem

\bibitem{gvni}
%
\begin{barticle}[auto:parserefs-M02]
\bauthor{\bsnm{Ghosal},~\bfnm{S.}\binits{S.}} \AND
\bauthor{\bsnm{van~der Vaart},~\bfnm{A.~W.}\binits{A.~W.}}
(\byear{2007}).
\btitle{Convergence rates of posterior distributions for noniid observations}.
\bjournal{Ann. Statist.}
\bvolume{35}
\bpages{192--223}.
\bid{mr={2332274}}
\end{barticle}
%
\bptok{imsref}%
\endbibitem

\bibitem{ghosh:ramamo:2003}
%
\begin{bbook}[mr]
\bauthor{\bsnm{Ghosh},~\bfnm{J.~K.}\binits{J.~K.}} \AND
\bauthor{\bsnm{Ramamoorthi},~\bfnm{R.~V.}\binits{R.~V.}}
(\byear{2003}).
\btitle{Bayesian Nonparametrics}.
\bpublisher{Springer},
\blocation{New York}.
\bid{mr={1992245}}
\end{bbook}
%
\bptok{imsref}%
\endbibitem

\bibitem{kim06}
%
\begin{barticle}[mr]
\bauthor{\bsnm{Kim},~\bfnm{Yongdai}\binits{Y.}}
(\byear{2006}).
\btitle{The {B}ernstein--von {M}ises theorem for the proportional
hazard model}.
\bjournal{Ann. Statist.}
\bvolume{34}
\bpages{1678--1700}.
\bid{doi={10.1214/009053606000000533}, issn={0090-5364}, mr={2283713}}
\end{barticle}
%
\bptok{imsref}%
\endbibitem

\bibitem{ksvv12}
%
\begin{bmisc}[auto:parserefs-M02]
\bauthor{\bsnm{Knapik},~\bfnm{B.~T.}\binits{B.~T.}},
\bauthor{\bsnm{Szab\'o},~\bfnm{B.~T.}\binits{B.~T.}},
\bauthor{\bsnm{van~der Vaart},~\bfnm{A.~W.}\binits{A.~W.}} \AND
\bauthor{\bparticle{van} \bsnm{Zanten},~\bfnm{J.~H.}\binits{J.~H.}}
(\byear{2012}).
\bhowpublished{Bayes procedures for adaptive inference in inverse
problems for the white noise model. Available at
\arxivurl{arXiv:1209.3628}.}
\end{bmisc}
%
\bptok{imsref}%
\endbibitem

\bibitem{kvv11}
%
\begin{barticle}[mr]
\bauthor{\bsnm{Knapik},~\bfnm{B.~T.}\binits{B.~T.}},
\bauthor{\bsnm{van~der Vaart},~\bfnm{A.~W.}\binits{A.~W.}} \AND
\bauthor{\bparticle{van} \bsnm{Zanten},~\bfnm{J.~H.}\binits{J.~H.}}
(\byear{2011}).
\btitle{Bayesian inverse problems with {G}aussian priors}.
\bjournal{Ann. Statist.}
\bvolume{39}
\bpages{2626--2657}.
\bid{doi={10.1214/11-AOS920}, issn={0090-5364}, mr={2906881}}
\end{barticle}
%
\bptok{imsref}%
\endbibitem

\bibitem{kruijer:rousseau:12}
%
\begin{barticle}[mr]
\bauthor{\bsnm{Kruijer},~\bfnm{Willem}\binits{W.}} \AND
\bauthor{\bsnm{Rousseau},~\bfnm{Judith}\binits{J.}}
(\byear{2013}).
\btitle{Bayesian semi-parametric estimation of the long-memory
parameter under FEXP-priors}.
\bjournal{Electron. J. Stat.}
\bvolume{7}
\bpages{2947--2969}.
\bid{doi={10.1214/13-EJS864}, issn={1935-7524}, mr={3151758}}
\end{barticle}
%
\bptok{imsref}%
\endbibitem

\bibitem{L96}
%
\begin{barticle}[mr]
\bauthor{\bsnm{Laurent},~\bfnm{B{\'e}atrice}\binits{B.}}
(\byear{1996}).
\btitle{Efficient estimation of integral functionals of a density}.
\bjournal{Ann. Statist.}
\bvolume{24}
\bpages{659--681}.
\bid{doi={10.1214/aos/1032894458}, issn={0090-5364}, mr={1394981}}
\end{barticle}
%
\bptok{imsref}%
\endbibitem

\bibitem{leahu:11}
%
\begin{barticle}[mr]
\bauthor{\bsnm{Leahu},~\bfnm{Haralambie}\binits{H.}}
(\byear{2011}).
\btitle{On the {B}ernstein--von {M}ises phenomenon in the {G}aussian
white noise model}.
\bjournal{Electron. J. Stat.}
\bvolume{5}
\bpages{373--404}.
\bid{doi={10.1214/11-EJS611}, issn={1935-7524}, mr={2802048}}
\end{barticle}
%
\bptok{imsref}%
\endbibitem

\bibitem{rivoirard:rousseau:09}
%
\begin{barticle}[mr]
\bauthor{\bsnm{Rivoirard},~\bfnm{Vincent}\binits{V.}} \AND
\bauthor{\bsnm{Rousseau},~\bfnm{Judith}\binits{J.}}
(\byear{2012}).
\btitle{Bernstein--von {M}ises theorem for linear functionals of the density}.
\bjournal{Ann. Statist.}
\bvolume{40}
\bpages{1489--1523}.
\bid{doi={10.1214/12-AOS1004}, issn={0090-5364}, mr={3015033}}
\end{barticle}
%
\bptok{imsref}%
\endbibitem

\bibitem{shen:2002}
%
\begin{barticle}[mr]
\bauthor{\bsnm{Shen},~\bfnm{Xiaotong}\binits{X.}}
(\byear{2002}).
\btitle{Asymptotic normality of semiparametric and nonparametric
posterior distributions}.
\bjournal{J. Amer. Statist. Assoc.}
\bvolume{97}
\bpages{222--235}.
\bid{doi={10.1198/016214502753479365}, issn={0162-1459}, mr={1947282}}
\end{barticle}
%
\bptok{imsref}%
\endbibitem

\bibitem{aad98}
%
\begin{bbook}[mr]
\bauthor{\bsnm{van~der Vaart},~\bfnm{A.~W.}\binits{A.~W.}}
(\byear{1998}).
\btitle{Asymptotic Statistics}.
\bseries{Cambridge Series in Statistical and Probabilistic Mathematics}
\bvolume{3}.
\bpublisher{Cambridge Univ. Press},
\blocation{Cambridge}.
\bid{doi={10.1017/CBO9780511802256}, mr={1652247}}
\end{bbook}
%
\bptok{imsref}%
\endbibitem

\bibitem{vvvz}
%
\begin{barticle}[mr]
\bauthor{\bsnm{van~der Vaart},~\bfnm{A.~W.}\binits{A.~W.}} \AND
\bauthor{\bparticle{van} \bsnm{Zanten},~\bfnm{J.~H.}\binits{J.~H.}}
(\byear{2008}).
\btitle{Rates of contraction of posterior distributions based on
{G}aussian process priors}.
\bjournal{Ann. Statist.}
\bvolume{36}
\bpages{1435--1463}.
\bid{doi={10.1214/009053607000000613}, issn={0090-5364}, mr={2418663}}
\end{barticle}
%
\bptok{imsref}%
\endbibitem

\bibitem{rkhs}
%
\begin{bincollection}[mr]
\bauthor{\bsnm{van~der Vaart},~\bfnm{A.~W.}\binits{A.~W.}} \AND
\bauthor{\bparticle{van} \bsnm{Zanten},~\bfnm{J.~H.}\binits{J.~H.}}
(\byear{2008}).
\btitle{Reproducing kernel {H}ilbert spaces of {G}aussian priors}.
In \bbooktitle{Pushing the Limits of Contemporary Statistics:
Contributions in Honor of {J}ayanta K. {G}hosh}.
\bseries{Inst. Math. Stat. Collect.}
\bvolume{3}
\bpages{200--222}.
\bpublisher{IMS},
\blocation{Beachwood, OH}.
\bid{doi={10.1214/074921708000000156}, mr={2459226}}
\end{bincollection}
%
\bptok{imsref}%
\endbibitem
\end{thebibliography}

%




\printaddresses
\end{document}